\newcommand{\R}{\ensuremath{\mathbb{R}}}
\newcommand{\N}{\ensuremath{\mathbb{N}}}
 \newcommand{\E}{\mathbb{E}}
 \newcommand{\norm}[1]{\left\lVert#1\right\rVert}
 \newcommand{\tit}[1]{\textit{#1}}
\crefname{hypothesis}{Hypothesis}{Hypotheses}
\author{Allen Paul \thanks{University of Bath 
  \email{ap2746@bath.ac.uk}.} \and  Neill Campbell \thanks{University of Bath \email{n.campbell@bath.ac.uk}}
\and Tony Shardlow \thanks{University of Bath 
\email{t.shardlow@bath.ac.uk}.}
}
 \title{Compression of Currents and Varifolds} 
\crefname{section}{Section}{Sections}
\crefname{subsection}{Section}{Sections}
\begin{document}

\maketitle

\begin{abstract}
We derive an algorithm for compression of the currents and varifolds representations of shapes, using ridge leverage score (RLS) sampling, and the theory of Nystrom approximation in Reproducing Kernel Hilbert Spaces. Our method is faster than existing compression techniques and comes with theoretical guarantees on the rate of decay of the compression error as a function of the smoothness of the associated shape representation. The obtained compressions are shown to be useful for accelerating downstream tasks such as nonlinear shape registration in the Large Deformation Diffeomorphic Metric Mapping (LDDMM) framework without loss of quality, even for very high compression ratios. The performance of our algorithm is demonstrated on large-scale shape data from modern geometry processing datasets, and is shown to be fast and scalable with rapid error decay.

\end{abstract}

\begin{keywords}
  Currents and Varifolds, Compression, Sparsity, Convergence.
\end{keywords}

\begin{AMS}
 	65D18, 65D15 , 68W20, 68W25
\end{AMS}

\section{Introduction}
Comparing and computing distances between geometric structures is a fundamental task in computer vision and geometric learning applications. Most commonly, such problems arise when building models of shape variation in a given application domain, for example, in computational anatomy, where shapes are most commonly available as discrete curves and surfaces. When building such models, one requires a metric on shapes in order to best tune the parameters of the model to match observed variation in the data. Ideally, the available shape data for the given task is in parametric correspondence, making the comparison trivial. However, in realistic shape learning tasks, this is far from the case with shape data acquired by completely different methods, with inconsistent parametrisations and differing resolutions across a dataset.


In the case where shape data is available as sub-manifolds of $\R^{d}$, one can deal with the lack of parametric correspondences in a more principled manner, using techniques from geometric measure theory \cite{FidelityMetrics}. In particular, using the so-called currents \cite{GlaunesCurrents}, varifolds \cite{Charon} and normal cycles \cite{NormalCycles} representation of shapes. These representations view shapes as objects that integrate differential forms on the underlying domain. In dimensions $d=2,3$ by choosing these differential forms to lie in a Hilbert space, these representations essentially embed the shapes into a dual space of differential forms. Using the dual metric on these representations allows one to compute the distance between shapes, in terms of their action on forms independent of their parametrisations. The computation of the dual metric can be written down explicitly when the Hilbert space is a Reproducing Kernel Hilbert Space or `RKHS' \cite{RKHS}, induced by a choice of positive-definite kernel function $k:X\times X \longrightarrow \R$. One can also perform the same technique for discrete shape data in a way that is consistent as the resolution of the data tends to infinity. This framework has been used extensively in the LDDMM literature \cite{FidelityMetrics}, for matching shapes with diffeomorphisms, using these shape metrics as the discrepancy term for matching.

However, while the aforementioned shape metrics account for the lack of parametric correspondence when comparing shapes, the practical computation of these metrics can be costly. For example, if one computes the currents metric between two triangulated surfaces with $M$ and $N$ triangles respectively, the metric computation has complexity $\mathcal{O}(MN)$. In modern geometry processing applications, when $M$ and $N$ can exceed $10^{5}$, this becomes far too expensive both memory and computation-wise. A similar issue holds for both the varifolds and normal cycles representations.

In order to deal with the cost of comparing shapes as geometric measures, there are many methods to approximately compute the metric. Approximate methods include fast multipole methods and grid-based FFT methods, which are reviewed in \cite{FidelityMetrics}. However, these methods only approximate the metric computation itself, and not the underlying measures. Furthermore, they often require computations with dense grids on the underlying domain. This works reasonably well for currents with a large spatial scale and shapes without a significant spatial spread. However, for small length-scales and shapes with a large spatial spread, this becomes expensive and results in a large number of grid points. This is exacerbated even more for varifolds, which are measures on a higher-dimensional space than the spatial domain, resulting in far larger grid sizes.

A significant recent development bypassing these issues is the excellent KeOps library \cite{Keops}, which allows large-scale exact metric computation (and exact gradients) through the use of efficient GPU tiling schemes with CUDA and C++. This method allows scalable and \textit{fast} metric computation up to a limit. However, past a certain size (typically $10^{5}$), the KeOps method slows down significantly, which is especially impactful for large-scale shape matching and group-wise registration due to repeated metric and gradient of metric computations. 

\subsection{Contribution}

The main contribution of this work is a fast algorithm for compressing large-scale currents and varifolds associated to shapes, using ridge leverage score (RLS) sampling, and the Nystrom approximation in reproducing kernel Hilbert spaces. The proposed compression algorithm is randomised and much faster than existing compression techniques, showing 10-100 times speed-ups in real-world experiments.  We compress target currents and varifolds of effective size $N,M \geq 10^{5}$, by forming sparse approximations of effective size $n$ and $m$ such that $n\ll N$, $m \ll M$. Post compression, one can compute exact distances between compressed measures at cost $\mathcal{O}(mn)$, which gives significant savings for computing both distance and gradients of the distance. Combined with the speed of compression, this can help massively accelerate subsequent geometric learning algorithms, as we show for LDDMM registration on large-scale surfaces in \cref{Numexpsect}.

Though the algorithm introduces an approximation error, we provide theoretical guarantees on the error that allow us to make estimates of the desired sparsity of compression. Indeed, our main result \cref{OrthogprojlemmaCurr} and \cref{corrcontinuous} together imply an error decay rate of $f(m)=\sum_{i=m+1}^{\infty}{\lambda_{i}} $ at the same rate as the sum of ordered eigenvalues $\lambda_{i}>0$ of the RKHS kernel $k$. In the Gaussian kernel setting, this implies error decay at an exponential rate
\begin{align*}
    \norm{\mu - \hat{\mu}}_{W^{*}}^{2} \leq C\sum_{i=m+1}^{\infty}{\lambda_{i}}  =  \mathcal{O}(m\exp(-\alpha m^{\frac{1}{d}})),
\end{align*}
for some $\alpha > 0$ and where $\hat{\mu}$ is an approximation of ‘size’ m to the original current/varifold $\mu$ in the dual of RKHS $W$. Such estimates hold both in expectation and with high probability with respect to random noisy samples of the original underlying shape.

\subsection{Outline}
In \cref{relatedwork}, we discuss the existing approaches to compression of measure-based shape representations, and compare to our work. Subsequently, we review the currents and varifolds representation of shape in \cref{CurrandVarreview}. We introduce the Nystrom approximation and its RKHS variant in \cref{NystromRKHS}, as well as existing bounds on the error of this approximation. In \cref{mainresults}, we present the proposed compression algorithm, theoretical guarantees and its application to the compression of currents and varifolds, as well as LDDMM matching. In \cref{CompressionofDiscreteMeasures} we present proofs of the main results. Finally, we demonstrate the strengths and weaknesses of the proposed compression algorithm against existing work in \cref{Numexpsect}, on large-scale shape data from modern geometry processing datasets. Finally, we discuss future work and possible extensions in \cref{concplusfut}.

\section{Related work}\label{relatedwork}

\subsection{Compression of currents and varifolds}
The most closely related methods to this work are compression algorithms developed separately for currents and varifolds. To the best of our knowledge, there are three works \cite{CurveCompression,DURRLEMAN,VarCompression} focusing on the compression of such representations of shapes. The first work \cite{CurveCompression} only applies to the case of compression of curve measures, which makes it restricted to specific applications with curve data. Therefore, our work is best compared against the more general works of \cite{DURRLEMAN} and \cite{VarCompression}, which apply to compressing general submanifolds.

The work of \cite{DURRLEMAN} is the first work to consider the compression of large-scale currents. This algorithm is an iterative greedy approach to approximating a target current. At each step, one adds a Dirac delta measure that is most correlated with the current residual measure, which is updated after each step. This algorithm guarantees a decrease in the approximation error and comes with an exponential convergence rate to the target current. However in practice, the computation of the residual and maximisation step tends to be very expensive when running this algorithm on large-scale currents. One can form an approximation to this using grid-based projections and the Fast Fourier Transform. However, the iterative nature of the algorithm and the per-iteration cost mean it can be very slow to compress a given large-scale current with $\sim 10^{5}$ landmarks. As a result, such an algorithm is not suitable when one is interested in computing compressions of multiple currents, or if one wishes to perform the compression routinely as part of an algorithm for shape analysis, in reasonable time scales. Furthermore, the grid-based nature of the algorithm that makes it well suited for currents is a drawback if one wishes to compress varifolds, as the cardinality of the grid representation explodes with the inclusion of the tangential component of varifolds. 

The work of \cite{VarCompression} applies to the compression of varifold measures. This works by fixing a compression level $m$, and minimising the discrepancy between the target varifold and a discrete varifold of size $m$,  using non-convex optimisation techniques applied to the error in the varifolds metric. However, the optimisation itself requires repeatedly computing the varifolds metric to the target measure, which is expensive. Furthermore, the non-convex nature of the associated minimisation problem means there is no guarantee that one finds a good solution without a careful initialisation. This method also does not give any theoretical guarantees on the quality of approximation, which is undesirable. 

In comparison to these existing works, our method, based on the Nystrom approximation, can be used to compress both currents and varifolds in the same framework. Furthermore, our algorithm is not an iterative greedy approach, and is based on randomised projections, meaning we can compress measures in a fraction of the time of the existing methods, while still retaining strong approximation guarantees, via the theoretical bounds afforded to us by the Nystrom approximation. Finally, one is not required to compute the entire target measure in our method, as opposed to the previous existing works. Due to this property, our method scales well to currents and varifolds with $>10^{5}$ landmarks without significant loss of speed and maintaining good approximation guarantees. 
\subsection{Kernel quadrature and interpolation}

We also remark here on the connection to kernel compression methods that have been developed separately, in the context of large-scale kernel learning methods. In particular, the works of \cite{BachKernel, KernquadDPP,KernelInterpCVS, KernelcompressionLyons}. These works all focus on the problem of constructing numerical quadrature rules for integration against probability measures. Such constructions yield discrete probability measures $\hat{\mu} \in W^{*}$ in the dual space of an RKHS $W$ that best approximate a given target probability measure $\mu \in \mathcal{P}(\mathcal{X})$, in the dual norm on $W^{*}$. 

The main technique we use for compression of the measure-based shape representations is an orthogonal projection onto a subspace generated by a randomised data-dependent distribution. In our case, the data-dependent distribution is defined by approximated ridge leverage scores. We bound the compression error of doing so, in terms of the interpolation error between two associated dual functions in the RKHS. 

The idea of interpolation/quadrature approximation of RKHS functions via orthogonal projection onto randomised sub-spaces has been explored in \cite{KernquadDPP} and \cite{KernelInterpCVS}, where the quadrature nodes (or control points as in this paper) are chosen via a repulsive point distribution. In \cite{KernquadDPP}, this distribution is a projection Determinantal Point Process (DPP), and in \cite{KernelInterpCVS}, this distribution is a continuous volume-sampling distribution, which is shown to be a mixture of projection DPPs. However, these approximation schemes require access to an eigen-decomposition of the kernel of the RKHS, or in the discrete case, an eigen-decomposition of the full kernel matrix. One proposed way to overcome this is via MCMC methods for DPP in \cite{KernelInterpCVS}, which eliminates the need for eigen-decomposition, but is still very slow for large-scale problems. 

While we obtain similar interpolation bounds in the specific case of discrete currents and varifolds, we do so in a way that does not depend on using the DPP distribution. Instead, we derive a bound in terms of trace error of the Nystrom approximation by leveraging connections to sparse interpolation results of \cite{connections}. From here, we are free to apply any theoretically sound sampling technique for the Nystrom approximation, such as RLS sampling, DPP, Greedy initialisation, etc. This step allows us to use the latest developments in RLS sampling, such as \cite{DAC,Musco}, in order to form fast, low-dimensional approximations that are orders of magnitude faster than DPP sampling, while still retaining comparable theoretical bounds.

The recent work of \cite{KernelcompressionLyons} on kernel quadrature does not have the same restrictions as \cite{KernquadDPP},\cite{KernelInterpCVS}, and has competitive run-times for compression of probability measures, with similar error bounds. Furthermore, this method shows promise for large-scale compression, as evidenced by numerical examples on empirical measures from machine learning datasets. However, this work applies to the case of probability measures and not to general linear functionals on an RKHS. Therefore, in the case of currents and normal cycles, which yield non-positive linear functionals on vector-valued functions, this method is not directly applicable. However, it may be interesting to compare the performance of our method against that of \cite{KernelcompressionLyons} for varifold compression, as a varifold can straightforwardly be written as a probability measure, provided the kernel is sufficiently fast-decaying, e.g. a Gaussian kernel.

\section{Currents and varifolds}\label{CurrandVarreview}

We now briefly review the currents and varifolds representation of shape, and their applications in computational geometry. In particular, their application to computing a metric distance between shapes. In the rest of this paper, we present the case for smooth surfaces $S \subset \R^{d}$, with $d=3$, but the analogous statements hold for smooth curves $C$ with $d=2,3$. Furthermore, we focus on the treatment of currents and varifolds in terms of their action on vector fields and functions. In full generality, this may be extended to action on differential forms and integration with respect to Hausdorff measure, but we do not pursue this route here. See \cite{DURRLEMAN,Charon,FidelityMetrics} for a treatment of this perspective.
\subsection{Currents}\label{Currentssec}

Given a smooth surface $S \subset \R^{d}$, one can define a continuous linear functional $\mu_{S}$ on continuous bounded vector fields $v \in C_{0}(\R^{d},\R^{d})$ as
\begin{align}\label{CurrentAction}
    \mu_{S}(v) := \int_{S}\langle v(x),n(x)\rangle dS(x),
\end{align}
where $n(x)$ denotes the outward unit normal at $x\in S$, and $dS(x)$ denotes surface measure. The action \eqref{CurrentAction} is simply the surface integral of $v$ along $S$. This functional $\mu_{S}$ is the \tit{current} \cite{GlaunesCurrents} associated to the curve $S$. Introducing a test Reproducing Kernel Hilbert Space (RKHS) \cite{RKHS} of continuous bounded vector fields $V$ with matrix-valued kernel $K:\R^{d}\times\R^{d}\longrightarrow \R^{d\times d}$, one embeds $\mu_{S}$ into the dual space $V^{*}$ due to the embedding $V \hookrightarrow C_{0}(\R^{d},\R^{d})$. A typical choice is a scalar diagonal kernel $K(x,y)=k(x,y)I_{d}$ for $k:\R^{d}\times\R^{d}\longrightarrow \R$; for example, with $k$ a Gaussian radial basis function (RBF) kernel or Cauchy kernel, both of which are standard choices \cite{FidelityMetrics}.  A natural choice of norm on the dual space $V^{*}$ is the dual norm
\begin{align*}
    \norm{\mu_{S}}_{V^{*}} = \underset{\underset{\norm{v}\leq 1}{v\in V}}{\sup}{\lvert \mu(v)\rvert}.
\end{align*}
Using the reproducing property of the kernel $k$ on $V$, one can prove \cite{GlaunesCurrents} that 
\begin{align*}
    \norm{\mu_{S}}_{V^{*}}^{2} =  \int_{S}\int_{S}{{k(x,y )\langle n(x),n(y)\rangle }dS(x)dS(y)}.
\end{align*}
The dual norm induces the dual metric, which is simply the metric induced by the norm. Given two elements $\mu,\kappa \in V^{*}$, one can compute the dual metric as
\begin{align*}
    d_{V^{*}}(\mu,\kappa) = \norm{\mu-\kappa}_{V^{*}} = \underset{\underset{\norm{v}\leq 1}{v\in V}}{\sup}{\lvert \mu(v) - \kappa(v)\rvert}.
\end{align*}
Given two smooth surfaces $S_{1},S_{2} \subset \R^{d}$, one can therefore compute distances between the shapes in the dual metric
\begin{align*}
   d(S_{1},S_{2}) := d_{V^{*}}(\mu_{S_{1}},\mu_{S_{2}}) = \norm{\mu_{S_{1}} -  \mu_{S_{2}}}_{V^{*}} = \underset{\underset{\norm{v}\leq 1}{v\in V}}{\sup}{\lvert \mu_{S_{1}}(v) - \mu_{S_{2}}(v)\rvert}.
\end{align*}
Intuitively, this distance measures how differently the shapes integrate the same vector fields $v\in V$ along their surface, ranging over the whole space $V$. Therefore, the global geometry of the shapes is distinguished by test vector fields that `probe' the shapes via the surface integral. Of course, the regularity and frequency of the vector fields in $V$ determine the characteristic scale at which shapes are distinguished in such metrics. For surfaces, the dual metric has the form
\begin{gather*}
\norm{\mu_{S_{1}} -  \mu_{S_{2}}}_{V^{*}}^{2} =\norm{\mu_{S_{1}}}_{V^{*}}^{2} - 2\langle \mu_{S_{1}},\mu_{S_{2}}\rangle_{V^{*}} + \norm{\mu_{S_{2}}}_{V^{*}}^{2},\\
\end{gather*}
which can be computed using the explicit formulae
\begin{gather*}
    \langle \mu_{S_{1}},\mu_{S_{2}}\rangle_{V^{*}} := \int_{S_{1}}\int_{S_{2}}{k(x,y )\langle n_{1}(x),n_{2}(y)\rangle dS_{1}(x)dS_{2}(y)},\quad \norm{\mu_{S_{1}}}_{V^{*}}^{2} = \langle \mu_{S_{1}},\mu_{S_{1}}\rangle_{V^{*}},
\end{gather*}
where the former term can be interpreted as an inner product, see \cite{FidelityMetrics}. Notice that computation of this metric between $S_{1}$ and $S_{2}$ does not require \tit{any} correspondence information, only access to the surfaces and normals. This makes the currents representation ideal for metric comparison of shape, as it does not require any further learning to extract dense parametric correspondences, and gives explicit formulae for the metric that gives a measure of the difference in global geometry. 

Of course in practice, one only has access to a discretisation of the smooth surfaces of interest $S_{1}, S_{2} \subset \R^{d}$. Most commonly, surfaces may be available as a triangulation $\hat{S}_{1}=\{T_{i}^{1}\}_{i=1}^{N}$, $\hat{S}_{2}=\{T_{i}^{2}\}_{i=1}^{M}$. In order to make geometric comparisons between the discretised shapes, we can once again embed the observed data into the dual space $V^{*}$ to obtain a discrete sum of Dirac functionals
\begin{align*}
    \hat{S}_{1} \hookrightarrow \hat{\mu}_{S_{1}}= \sum_{i=1}^{N}{\delta_{c_{i}^{1}}\nu_{i}^{1} } \in V^{*} ,\quad \hat{S}_{2} \hookrightarrow \hat{\mu}_{S_{2}} = \sum_{j=1}^{N}{\delta_{c_{j}^{2}}\nu_{j}^{2} } \in V^{*},
\end{align*}
where $c_{i}^{1}$, $\nu_{i}^{1} \in \R^{d}$  denotes the centre and \tit{unnormalised} normal vector, respectively, of the $i$'th triangle $T_{i}$ of $\hat{S_{1}}$. Both the centres and normals can be computed from the vertices and edges of the discrete triangles using averages and cross products. Each weighted Dirac functional $\delta_{a}b \in V^{*}$ in the above, has the action $(\delta_{a}b|v) = b^{T}v(a)$ for all $v \in V$. This gives a simple action for the discrete functionals. For example for $\hat{S}_{1}$,
\begin{align*}
    \hat{\mu}_{S_{1}}(v)= \sum_{i=1}^{N}{(\nu_{i}^{1})^{T}v(c_{i}^{1}) }.
\end{align*}
It is possible to show under suitable regularity constraints \cite{ConvergenceofVarCurr1,ConvergenceofVarCurr2,FidelityMetrics} the discrete representation is consistent with the continuous case, in the sense that $\norm{ \hat{\mu}_{S_{1}}-\mu_{S_{1}}}_{V^{*}} \leq C\tau(N)$ such that $\tau(N)\to 0$ as $N \to \infty$. Therefore, for a sufficiently fine discretisation, we do not lose much geometric information by working with $\hat{\mu}_{S_{1}}$.

Once again, we can compute the dual metric between $\hat{\mu}_{S_{1}}, \hat{\mu}_{S_{2}}$ in order to make geometric comparisons without correspondences. Furthermore in the discrete case, using the reproducing property of the RKHS, one can show \cite{GlaunesCurrents} the dual metric has a very simple expression that can be computed \tit{exactly}, using kernel evaluations and normal vector inner products:
\begin{gather}
  \begin{split}
    d_{V^{*}}( \hat{\mu}_{S_{1}}, \hat{\mu}_{S_{2}})^{2} &= \sum_{i,j=1}^{N}{k(c_{i}^{1},c_{j}^{1})\langle \nu_{i}^{1},\nu_{j}^{1}\rangle} - 2\sum_{i=1}^{N}\sum_{j=1}^{M}{k(c_{i}^{1},c_{j}^{2})\langle \nu_{i}^{1},\nu_{j}^{2}\rangle}\\&\qquad + \sum_{i,j=1}^{M}{k(c_{i}^{2},c_{j}^{2})\langle \nu_{i}^{2},\nu_{j}^{2}\rangle}.
  \end{split}
  \label{DiscreteCurrMet}
\end{gather}
This is easy to implement and to compute gradients of, for downstream tasks such as shape registration using the currents metric as a discrepancy term. 

\begin{remark}
     Note that the presentation of this section can also easily be extended to continuous and discrete curve data in $\R^{d}$ for $d\in\{2,3\}$ as well. This is done by replacing the surface integrals with line integrals, and formulating the analogous discrete formulation with discrete tangent vectors and segment centres. See \cite{CurveLDDMM} for the specifics.
\end{remark}

However, in modern geometry processing tasks where discretised shapes $\hat{S_{1}},\hat{S_{2}}$ are available with resolutions $N,M \geq 10^{4} $, one has to be careful in the method used for computing the discrete currents metric presented in \cref{DiscreteCurrMet}. The computational and memory cost of the discrete metrics scales as $\mathcal{O}(NM)$, which makes the naive implementation infeasible for the large $N,M$ regime, without specialised hardware, or tiling schemes such as implemented in Keops \cite{Keops}. 

\subsection{Varifolds}\label{Varifolds}

In the varifolds representation of shape \cite{Charon}, shapes are treated as linear functionals on a space of {real-valued} functions lying in an RKHS $V$ consisting of functions of the form:
\begin{align*}
    f:\mathcal{X} \longrightarrow \R ,\quad \mathcal{X}=\R^{d}\times \mathbb{S}^{d-1},
\end{align*}
induced by a real-valued kernel function $K:\mathcal{X} \times \mathcal{X} \longrightarrow \R$. Typically, this is a product kernel over the two components of $\mathcal{X}$, of the form
\begin{align*}
    K((x,\nu),(y,\mu)) = K_{p}(x,y)K_{s}(\nu,\mu),\quad (x,\nu),(y,\mu) \in \mathcal{X},
\end{align*}
that factorises over spatial and spherical components, as $K_{p}$ and $K_{s}$, respectively. 

As in \cref{Currentssec}, smooth surfaces $S\subset \R^{d}$ are embedded into the dual space $V^{*}$ via surface integral action
\begin{align*}
    \mu_{S}(v) = \int_{S}{v(x,n(x))dS(x)}.
\end{align*}
The functional $\mu_{S}$ is known as the varifold associated to $S$. Once again, given two smooth surfaces $S_{1},S_{2}\subset \R^{d}$ to be compared, one can compute geometric distances between them in the dual metric
\begin{align*}
    d(S_{1},S_{2}) := d_{V^{*}}(\mu_{S_{1}},\mu_{S_{2}}) = \norm{\mu_{S_{1}} - \mu_{S_{2}}}_{V^{*}}.
\end{align*}
A similar intuition as \cref{Currentssec} holds, that metric comparison of shapes is made by comparing how differently shapes integrate the same functions in the RKHS $V$  along their surface. However, one must note that the test functions now vary both in space and normal components. As we shall see in the metric expression, this means the varifold gives a strictly richer representation of shape.

As in \cref{Currentssec}, it is possible to show closed-form expressions for the dual metric when applied to varifolds associated to shapes. In particular,
\begin{align*}
  \norm{\mu_{S_{1}} -  \mu_{S_{2}}}_{V^{*}}^{2} = \norm{\mu_{S_{1}}}_{V^{*}}^{2} - 2\langle \mu_{S_{1}},\mu_{S_{2}}\rangle_{V^{*}} + \norm{\mu_{S_{2}}}_{V^{*}}^{2},
\end{align*}
where the individual terms are explicitly computed as
\begin{align*}
    \langle \mu_{S_{1}},\mu_{S_{2}}\rangle_{V^{*}} := \int_{S_{1}}\int_{S_{2}}{  K_{p}(x,y)K_{s}(n_{1}(x),n_{2}(y))dS_{1}(x)dS_{2}(y)},\quad \norm{\mu_{S_{1}}}_{V^{*}}^{2} = \langle \mu_{S_{1}},\mu_{S_{1}}\rangle.
\end{align*}
In the case where the spherical kernel is linear, so that $K_{s}(u,v)=\langle u,v\rangle$, then the above metric reduces to the currents metric between shapes. However, in general one can choose \tit{any} nonlinear valid kernel on the sphere, such as the Gaussian kernel. In this case, the expression of the inner-product term makes clear why the varifolds representation is often preferred over currents. In particular, the varifolds metric performs nonlinear kernel comparison of normal vectors along the surface, whereas the currents metric is restricted to linear comparison. The extra nonlinearity in the normal component means there are no cancellation effects for varifolds as opposed to currents, and allows a richer metric comparison of shapes.

One can perform a similar comparison for discrete shapes, using the same approach as in \cref{Currentssec}. Given triangulated shapes $\hat{S}_{1}=\{T_{i}^{1}\}_{i=1}^{N}$, $\hat{S}_{2}=\{T_{i}^{2}\}_{i=1}^{M}$, one defines the associated discrete varifolds in $V^{*}$ as
\begin{align*}
   \mu_{\hat{S}_{1} } = \sum_{i=1}^{N}{\delta_{(c_{i}^{1},n_{i}^{1})}\norm{\nu_{i}^{1}}},\quad \mu_{\hat{S}_{2} } = \sum_{i=1}^{M}{\delta_{(c_{i}^{2},n_{i}^{2})}\norm{\nu_{i}^{2}}},
\end{align*}
where the notation for $c_{i}^{k},\nu_{i}^{k}$ are the same as \cref{Currentssec} and $n_{i}^{k}$ denotes \tit{unit} normal vector of triangle $T_{i}$. As for currents, under suitable regularity constraints on the mesh refinement \cite{ConvergenceofVarCurr1,ConvergenceofVarCurr2,FidelityMetrics,Charon} the discrete representation is consistent with the continuous representation, in the sense that $\norm{ \hat{\mu}_{S_{1}}-\mu_{S_{1}}}_{V^{*}} \leq C\tau(N)$ such that $\tau(N)\to 0$ as $N \to \infty$, for smooth $S$.

Finally, the reproducing property of the RKHS kernel yields simple closed-form formulae for the varifolds metric between two discrete shapes
\begin{align*}
 d_{V^{*}}( \hat{\mu}_{S_{1}}, \hat{\mu}_{S_{2}})^{2} &= \sum_{i,j=1}^{N}{K_{p}(c_{i}^{1},c_{j}^{1})K_{s}(n_{i}^{1},n_{j}^{1})\norm{\nu_{i}^{1}}\norm{\nu_{j}^{1}}} \\\quad&- 2\sum_{i=1}^{N}\sum_{j=1}^{M}{K_{p}(c_{i}^{1},c_{j}^{2})K_{s}(n_{i}^{1},n_{j}^{2})\norm{\nu_{i}^{1}}\norm{\nu_{j}^{2}}  }\\ &\quad+ \sum_{i,j=1}^{M}{K_{p}(c_{i}^{2},c_{j}^{2})K_{s}(n_{i}^{2},n_{j}^{2})\norm{\nu_{i}^{2}}\norm{\nu_{j}^{2}} }.
\end{align*}

The analogous statements of this section for varifolds associated with curves in $\R^{d}$ for $d\in \{2,3\}$, also hold, see \cite{FidelityMetrics}. Notice once more that in the large $N,M$ regime, naive computation of the varifold metric becomes infeasible both computationally and memory-wise, and requires specialised routines and GPU implementations such as in the Keops library.

\section{Nystrom approximation in RKHS}\label{NystromRKHS}
We now discuss the main tool we use to derive a compression algorithm: the Nystrom approximation in RKHS.

Suppose we have a regression dataset $\{(x_{1},y_{1}),\dots,(x_{n},y_{n})\} \subset \mathcal{X} \times \R$ for a domain $\mathcal{X}$, on which we perform Kernel Ridge Regression (KRR) with positive-definite kernel $k$. Kernel ridge regression is the following regularised minimisation scheme over the RKHS $H_{k}$ of real-valued functions induced by the kernel $k$:
\begin{align}\label{KRRProblem}
    \hat{f} = \underset{f \in H_{k}}{\mathrm{argmin}}{\sum_{i=1}^{n}(y_{i}-f(x_{i}))^{2} + \lambda\norm{f}_{H_{k}}^{2} }.
\end{align}
There exists a unique minimiser to this problem \cite{connections}, and it takes the form
\begin{align*}
    \hat{f} = \sum_{i=1}^{n}{\alpha_{i}k(\cdot,x_{i})},
\end{align*}
where the coefficient vector ${\alpha} = (\alpha_{1},\dots,\alpha_{n})^{T} \in \R^{n}$ is given by
\begin{align*}
    {\alpha} = (K_{XX} + \lambda I_{n})^{-1}{y},\quad {y}=(y_{1},\dots,y_{n})^{T}\in \R^{n},
\end{align*}
where $(K_{XX}) \in \R^{n\times n}$ is the kernel matrix evaluated on input points $\{x_{i}\}_{i=1}^{n}$.

The \tit{Nystrom} KRR \cite{connections} is a related problem that fits the data in a restricted subspace of the RKHS, that is parametrised by a finite set of $m$ distinct control points $\bm{c}=\{c_{i}\}_{i=1}^{m}\subset \mathcal{X}$ in the domain. Defining the subspace
\begin{align*}
    M := \bigg\{\sum_{j=1}^{m}{\alpha_{j}k(\cdot,c_{j}) }: \alpha_{i} \in \R\bigg\} = \mathrm{span}\{k(\cdot,c_{1}),\dots,k(\cdot,c_{m})\},
\end{align*}
the Nystrom KRR problem is to find
\begin{align}\label{Nystromsol}
    \bar{f} = \underset{f \in M}{\mathrm{argmin}}{\sum_{i=1}^{n}(y_{i}-f(x_{i}))^{2} + \lambda\norm{f}_{H_{k}}^{2} }.
\end{align}
Once again, there exists a unique solution to this problem of the form
\begin{align*}
    f = \sum_{i=1}^{m}{\beta_{i}k(\cdot,c_{i})},
\end{align*}
where the coefficients ${\beta} = (\beta_{1},\dots,\beta_{m})^{T} \in \R^{m}$ are given by
\begin{align*}
    {\beta} = (K_{CX}K_{XC} + \lambda K_{CC})^{-1}K_{CX}{y}.
\end{align*}
where $K_{CC} \in \R^{m\times m}$ is defined analogously to the full kernel matrix, and $(K_{XC})_{ij}=k(x_{i},c_{j})$. The following theorem of \cite{connections} provides a bound on the error between the full solution and the $m$-dimensional Nystrom approximation to the KRR problem.

\begin{theorem}[\cite{connections}]\label{thmox}
Let $k: \mathcal{X} \times \mathcal{X} \longrightarrow \R_{\geq 0}$ be a symmetric positive-definite kernel function with associated RKHS $H_{k}$. Let $\hat{f}$, $\bar{f}$ be the KRR and Nystrom KRR solutions, respectively, for data $\{(x_{i},y_{i})\}_{i=1}^{n}\subset\mathcal{X} \times \R$, $m$ distinct control points $\bm{c}=\{c_{i}\}_{i=1}^{m}$, and $\lambda>0$ fixed. Then, the following bound holds
\begin{align*}
    \norm{\hat{f}-\bar{f}}_{H_{k}}^{2} \leq \frac{2\mathrm{tr}(K_{XX}-Q_{XX})\norm{y}^{2}}{\lambda^{2}},\quad Q_{XX}:=K_{XC}K_{CC}^{-1}K_{CX}.
\end{align*}
\end{theorem}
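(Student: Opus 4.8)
The plan is to exploit that $\bar f$ minimises, over the finite-dimensional subspace $M=\mathrm{span}\{k(\cdot,c_{1}),\dots,k(\cdot,c_{m})\}$, the \emph{same} functional $J(f):=\sum_{i=1}^{n}(y_{i}-f(x_{i}))^{2}+\lambda\norm{f}_{H_{k}}^{2}$ that $\hat f$ minimises over all of $H_{k}$, and to compare the two by feeding a well-chosen point of $M$ into the Nyström problem. Write $P$ for the orthogonal projection of $H_{k}$ onto $M$. The structural fact I would record first is that $Q_{XX}$ is exactly the Gram matrix of the \emph{projected} feature vectors: since $Pk(\cdot,x_{i})=\sum_{\ell}(K_{CC}^{-1}K_{CX})_{\ell i}\,k(\cdot,c_{\ell})$, one gets $(Q_{XX})_{ij}=\langle Pk(\cdot,x_{i}),Pk(\cdot,x_{j})\rangle_{H_{k}}$. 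Hence $K_{XX}-Q_{XX}$ is the Gram matrix of the residuals $(I-P)k(\cdot,x_{i})$, so it is positive semidefinite and $\mathrm{tr}(K_{XX}-Q_{XX})=\sum_{i=1}^{n}\norm{(I-P)k(\cdot,x_{i})}_{H_{k}}^{2}$. This is the quantity that will control the error.

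Next I would pull two elementary levers. First, $J$ is an exact quadratic with constant Hessian $2A^{*}A+2\lambda I\succeq2\lambda I$, where $(Af)_{i}=f(x_{i})$ is the sampling operator, and $\hat f$ is its unconstrained minimiser (so $\nabla J(\hat f)=0$); a Taylor expansion at $\hat f$ then gives $J(g)-J(\hat f)\ge\lambda\norm{g-\hat f}_{H_{k}}^{2}$ for every $g$, and taking $g=\bar f$ yields $\lambda\norm{\hat f-\bar f}_{H_{k}}^{2}\le J(\bar f)-J(\hat f)$. Second, $P\hat f\in M$ and $\bar f$ minimises $J$ over $M$, so $J(\bar f)\le J(P\hat f)$, and it only remains to estimate $J(P\hat f)-J(\hat f)$. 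Writing $\hat f=\sum_{i}\alpha_{i}k(\cdot,x_{i})$ with $\bm{\alpha}=(K_{XX}+\lambda I)^{-1}\bm{y}$, and using the Gram identity above, one has $\hat f(X)=K_{XX}\bm{\alpha}$, $(P\hat f)(X)=Q_{XX}\bm{\alpha}$, $\bm{y}-\hat f(X)=\lambda\bm{\alpha}$, and $\norm{P\hat f}_{H_{k}}^{2}=\bm{\alpha}^{T}Q_{XX}\bm{\alpha}$; substituting these and letting the cross terms cancel, I expect to land on the exact identity
\begin{align*}
J(P\hat f)-J(\hat f)=\bm{\alpha}^{T}(K_{XX}-Q_{XX})\bigl(\lambda I+K_{XX}-Q_{XX}\bigr)\bm{\alpha}.
\end{align*}

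Finally I would bound this quadratic form. Since $K_{XX}-Q_{XX}$ is positive semidefinite with every eigenvalue at most $\mathrm{tr}(K_{XX}-Q_{XX})$, and it commutes with $\lambda I+K_{XX}-Q_{XX}\succeq0$, we get $J(P\hat f)-J(\hat f)\le\mathrm{tr}(K_{XX}-Q_{XX})\,\bm{\alpha}^{T}(\lambda I+K_{XX}-Q_{XX})\bm{\alpha}\le\mathrm{tr}(K_{XX}-Q_{XX})\,\bm{\alpha}^{T}(\lambda I+K_{XX})\bm{\alpha}=\mathrm{tr}(K_{XX}-Q_{XX})\,\bm{y}^{T}(K_{XX}+\lambda I)^{-1}\bm{y}\le\mathrm{tr}(K_{XX}-Q_{XX})\norm{\bm{y}}^{2}/\lambda$, using $Q_{XX}\succeq0$ and $(K_{XX}+\lambda I)^{-1}\preceq\lambda^{-1}I$. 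Dividing by $\lambda$ gives $\norm{\hat f-\bar f}_{H_{k}}^{2}\le\mathrm{tr}(K_{XX}-Q_{XX})\norm{\bm{y}}^{2}/\lambda^{2}$, which in fact sharpens the claimed constant $2$ to $1$. The one delicate point I anticipate is the middle step: when expanding $J(P\hat f)-J(\hat f)$ one must keep the favourable term $\lambda(\norm{P\hat f}_{H_{k}}^{2}-\norm{\hat f}_{H_{k}}^{2})=-\lambda\norm{(I-P)\hat f}_{H_{k}}^{2}\le0$ rather than discarding it, since otherwise a spurious $\mathrm{tr}(K_{XX}-Q_{XX})^{2}$ term survives and the bound degrades in the regime $\mathrm{tr}(K_{XX}-Q_{XX})\gg\lambda$; everything else is routine linear algebra. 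A clean fallback, should the bookkeeping prove awkward, is to observe that $\bar f$ coincides with ordinary KRR run with kernel matrix $Q_{XX}$, whence $J(\bar f)=\lambda\bm{y}^{T}(Q_{XX}+\lambda I)^{-1}\bm{y}$ and $J(\hat f)=\lambda\bm{y}^{T}(K_{XX}+\lambda I)^{-1}\bm{y}$, and then to bound the difference directly via the resolvent identity $(Q_{XX}+\lambda I)^{-1}-(K_{XX}+\lambda I)^{-1}=(Q_{XX}+\lambda I)^{-1}(K_{XX}-Q_{XX})(K_{XX}+\lambda I)^{-1}$ together with Cauchy--Schwarz.
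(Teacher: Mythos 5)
Your argument is correct, and it is worth noting at the outset that the paper itself offers no proof to compare against: Theorem~\ref{thmox} is quoted verbatim from \cite{connections} and used as a black box. Your derivation is therefore a genuine addition rather than a rederivation of something in the text. The chain of steps checks out: the identity $(Q_{XX})_{ij}=\langle Pk(\cdot,x_{i}),Pk(\cdot,x_{j})\rangle_{H_{k}}$ and hence $\mathrm{tr}(K_{XX}-Q_{XX})=\sum_{i}\norm{(I-P)k(\cdot,x_{i})}_{H_{k}}^{2}$ is right; the strong-convexity inequality $\lambda\norm{\hat f-\bar f}_{H_{k}}^{2}\le J(\bar f)-J(\hat f)$ follows from $\nabla J(\hat f)=0$ and the Hessian lower bound $2\lambda I$; the comparison point $P\hat f\in M$ gives $J(\bar f)\le J(P\hat f)$; and I have verified your exact identity $J(P\hat f)-J(\hat f)=\bm{\alpha}^{T}(K_{XX}-Q_{XX})(\lambda I+K_{XX}-Q_{XX})\bm{\alpha}$ by direct expansion using $\bm{y}-K_{XX}\bm{\alpha}=\lambda\bm{\alpha}$. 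The final bound via $\norm{K_{XX}-Q_{XX}}_{2}\le\mathrm{tr}(K_{XX}-Q_{XX})$, the commutation trick, $Q_{XX}\succeq 0$, and $\bm{\alpha}^{T}(K_{XX}+\lambda I)\bm{\alpha}=\bm{y}^{T}(K_{XX}+\lambda I)^{-1}\bm{y}\le\norm{\bm{y}}^{2}/\lambda$ is all sound, and indeed yields the constant $1$ in place of the stated $2$, so your result is strictly sharper than the version the paper imports (which only improves the constant $C$ downstream in Lemma~\ref{fieldtraceapprox} and Theorem~\ref{OrthogprojlemmaCurr}, not the rates). Your warning about retaining the term $-\lambda\norm{(I-P)\hat f}_{H_{k}}^{2}$ is well taken, and the fallback via $J(\bar f)=\lambda\bm{y}^{T}(Q_{XX}+\lambda I)^{-1}\bm{y}$ and the resolvent identity also works and gives the same constant; either route would serve as a self-contained proof here.
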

The matrix $Q_{XX}$ in the previous theorem is the well-known Nystrom approximation \cite{OGNystrom, SpectralMethods} to the kernel matrix $K_{XX}$.

\subsubsection{Nystrom approximation for kernel matrices}\label{NystromMatrices}

The Nystrom approximation method can be applied to general positive-semidefinite matrices, but is most frequently used in forming a low-rank approximation to kernel matrices. Suppose that we are given $n$ points $X=\{x_{i}\}_{i=1}^{n}\subset\mathcal{X}$ in the domain, and $k:\mathcal{X} \times \mathcal{X} \longrightarrow \R$ is a positive semi-definite kernel function. Defining $(K_{XX})_{ij} = k(x_{i},x_{j})$, how can we best approximate the kernel matrix $K_{XX} \in \R^{n\times n}$ with a low-rank matrix? It is well known from finite-dimensional linear algebra that the best rank-$m$ approximation to the kernel matrix is given by the SVD truncation, where optimality is measured by error in Frobenius norm. However, the computation of such an approximation requires computing an eigen-decomposition, which scales as $\mathcal{O}(n^{3})$. For very large $n$, as frequently occurs in machine learning, this is an obstacle. 


In the Nystrom approximation \cite{OGNystrom} for kernel matrices, one subsamples $m$ distinct control points $\bm{c}=\{c_{i}\}_{i=1}^{m}\subset X=\{x_{i}\}_{i=1}^{n}$, and forms a \tit{Nystrom approximation} to the full matrix as,
\begin{align*}
    K_{XX} \approx K_{XC}K_{CC}^{-1}K_{CX} =: Q_{XX}.
\end{align*}
 It is possible to obtain an error bound for this approximation \cite{SpectralMethods}, for a suitable sampling distribution of the control points. 
\begin{theorem}[\cite{SpectralMethods}]\label{eigenvaluethm}
Suppose that $m$ distinct control points $\bm{c}=\{c_{i}\}_{i=1}^{m}$, are drawn from $X$ with probability $p(\bm{c}) \propto det(K_{CC})$ that is proportional to the determinant of the cross evaluation matrix. Then, the following bound holds in expectation with respect to $p(\bm{c})$:
\begin{align*}
  \E_{\bm{c}}[\mathrm{tr}(K_{XX} - Q_{XX})] \leq (m+1)\sum_{i=m+1}^{n}{\lambda_{i}(K_{XX})},
\end{align*}
where $\lambda_{i}(K_{XX})$ denotes the $i$'th largest eigenvalue of the full kernel matrix.
\end{theorem}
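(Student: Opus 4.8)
The plan is to recognise the sampling distribution $p(\bm c)\propto\det(K_{CC})$ as $m$-\emph{volume sampling} of a size-$m$ subset of the columns of a square-root factor of $K_{XX}$, and then to reduce the statement to two clean facts: an exact telescoping identity for the expected Nyström trace error under volume sampling, and a monotonicity inequality for elementary symmetric polynomials of the eigenvalues.

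First I would dispose of the left-hand inequality. Fix $\bm c$ and let $S\subset\{1,\dots,n\}$ index the control points, so $K_{CC}=(K_{XX})_{S,S}$. Writing $K_{XX}=A^{\top}A$ with $A=K_{XX}^{1/2}=[a_1,\dots,a_n]$, we have $(K_{XX})_{S,S}=A_S^{\top}A_S$ (the Gram matrix of $\{a_i\}_{i\in S}$) and hence $Q_{XX}=K_{XC}K_{CC}^{-1}K_{CX}=A^{\top}A_S(A_S^{\top}A_S)^{-1}A_S^{\top}A=A^{\top}\Pi_S A$, where $\Pi_S$ is the orthogonal projection onto $\mathrm{span}(A_S)$. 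Thus $K_{XX}-Q_{XX}=A^{\top}(I-\Pi_S)A\succeq 0$ pointwise, and for a positive semidefinite matrix the spectral norm is dominated by the trace, so $\norm{K_{XX}-Q_{XX}}_2\le\mathrm{tr}(K_{XX}-Q_{XX})$; taking expectations gives the left inequality.

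Next I would compute $\E_{\bm c}[\mathrm{tr}(K_{XX}-Q_{XX})]$ exactly. Pointwise, $\mathrm{tr}(K_{XX}-Q_{XX})=\norm{(I-\Pi_S)A}_F^2=\sum_{i\notin S}\mathrm{dist}^2\!\big(a_i,\mathrm{span}(A_S)\big)$, and the base-times-height formula for parallelepiped volumes gives $\mathrm{dist}^2(a_i,\mathrm{span}(A_S))=\det(A_{S\cup\{i\}}^{\top}A_{S\cup\{i\}})/\det(A_S^{\top}A_S)$. Since $p(S)=\det(A_S^{\top}A_S)/Z$ with normaliser $Z=\sum_{|S'|=m}\det(A_{S'}^{\top}A_{S'})$, the factors $\det(A_S^{\top}A_S)$ cancel, and in the double sum over $(S,i)$ with $i\notin S$ each size-$(m{+}1)$ set $T=S\cup\{i\}$ arises exactly $m{+}1$ times, so $\E_{\bm c}[\mathrm{tr}(K_{XX}-Q_{XX})]=(m+1)\,\big(\sum_{|T|=m+1}\det(A_T^{\top}A_T)\big)/Z$. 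Applying the Cauchy–Binet formula identifies $\sum_{|T|=k}\det(A_T^{\top}A_T)$ with the $k$-th elementary symmetric polynomial $e_k(\lambda_1,\dots,\lambda_n)$ of the eigenvalues $\lambda_i=\lambda_i(K_{XX})$, whence $\E_{\bm c}[\mathrm{tr}(K_{XX}-Q_{XX})]=(m+1)\,e_{m+1}(\lambda)/e_m(\lambda)$ (this already uses the non-degenerate case $\mathrm{rank}(K_{XX})\ge m$, where $e_m(\lambda)>0$ and the sampling measure is defined; if $\mathrm{rank}(K_{XX})=m$ both sides of the claimed bound vanish). Finally I would close with the inequality $e_{m+1}(\lambda)/e_m(\lambda)\le\sum_{i=m+1}^{n}\lambda_i$ for $\lambda_1\ge\cdots\ge\lambda_n\ge0$: grouping the monomials of $e_{m+1}$ by their largest index $j$ gives $e_{m+1}(\lambda)=\sum_{j=m+1}^{n}\lambda_j\,e_m(\lambda_1,\dots,\lambda_{j-1})\le\big(\sum_{j=m+1}^n\lambda_j\big)e_m(\lambda)$, using that $e_m$ increases when nonnegative arguments are appended. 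Combining the last two displays yields the right-hand inequality, which together with the first step gives the full chain.

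The argument is exactly the volume-sampling analysis underlying \cite{SpectralMethods}, and the main obstacle I anticipate is the bookkeeping in the telescoping/Cauchy–Binet step: verifying that the normalising constant is precisely $e_m(\lambda)$, that each $(m{+}1)$-subset is counted with multiplicity exactly $m{+}1$, and that the base-times-height identity is applied to linearly independent column sets only. Passing through the finite symmetric square root $A=K_{XX}^{1/2}$ (rather than an abstract, possibly infinite-dimensional feature map) keeps all of these steps purely finite-dimensional and is what makes the bookkeeping routine.
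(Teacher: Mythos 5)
Your argument is correct, and it is precisely the volume-sampling proof (projection decomposition of the trace, base-times-height Gram-determinant identity, Cauchy--Binet, and the elementary-symmetric-polynomial bound $e_{m+1}/e_m\le\sum_{i>m}\lambda_i$) that underlies the cited result; the paper itself gives no proof and simply defers to \cite{SpectralMethods}. Your handling of the degenerate case $\mathrm{rank}(K_{XX})\le m$ and of the multiplicity $m+1$ in the telescoping step is the only bookkeeping that needed care, and both are done correctly.
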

See \cite{SpectralMethods} for a proof. The distribution of control points such that $p(\bm{c}) \propto \det(K_{CC})$ is known as a \textit{discrete determinantal point process (DPP)}. For a fixed number of control points $m$, it is also referred to as a discrete $m$-DPP \cite{SpectralMethods}.

\cref{eigenvaluethm} demonstrates that with respect to draws from the $m$-DPP, the rate of decay of the Nystrom matrix approximation error is at least as fast as the eigenvalue decay of the target matrix. The eigenvalue decay of the kernel matrix is, of course, closely connected to the properties of the kernel function. For example, \cite{ratesofconvergesparsegp} proves the following result.
\begin{theorem}[\cite{ratesofconvergesparsegp}]
Let $p$ be a probability density function over the domain $\mathcal{X}$. Denote $X=\{x_{i}\}_{i=1}^{n}$ a size $n$ i.i.d sample, from this distribution. Taking expectation over with respect to draws from this distribution gives the following bounds:
\begin{align}\label{eigdecay}
  \E_{X}\bigg[\sum_{i=m+1}^{n}{\lambda_{i}(K_{XX})}\bigg] \leq n
  \sum_{i=m+1}^{\infty}{\lambda_{m}}
\end{align}

where the eigenvalues $\lambda_{i}$ in the right-hand side sum are the decreasingly ordered eigenvalues of the kernel integral operator $\mathcal{K}_{p}: L^{2}(\mathcal{X},pdx)\longrightarrow L^{2}(\mathcal{X},pdx)$ defined as follows,
\begin{align*}
    (\mathcal{K}_{p}g)(x') = \int_{\mathcal{X}}{g(x)k(x,x')p(x)dx},
\end{align*}
where one defines
\begin{align*}
L^{2}(\mathcal{X},p dx) := \bigg\{f:\mathcal{X}\longrightarrow \R \ :\ \int_{\mathcal{X}}{f(x)^{2}p(x)dx} < +\infty\bigg\} ,
\end{align*}
up to $p$ almost-everywhere equivalence.
\end{theorem}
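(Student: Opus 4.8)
The plan is to reduce everything to \cref{eigenvaluethm} and then control the expected tail of the empirical spectrum by a variational argument. Since \cref{eigenvaluethm} holds pointwise for every fixed sample $\bm{x}=\{x_{1},\dots,x_{n}\}$, taking $\E_{\bm{x}}$ of both sides and using monotonicity of expectation immediately gives the first inequality $\E_{\bm{x}}[\E_{\bm{c}}[\mathrm{tr}(K_{XX}-Q_{XX})]]\le (m+1)\,\E_{\bm{x}}[\sum_{i=m+1}^{n}\lambda_{i}(K_{XX})]$. So the whole content is the second inequality: bounding $\E_{\bm{x}}[\sum_{i=m+1}^{n}\lambda_{i}(K_{XX})]$ by $n\sum_{i=m+1}^{\infty}\lambda_{i}$, where $\lambda_i$ are the eigenvalues of the integral operator $\mathcal{K}$.

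For that I would pass from the matrix $K_{XX}$ to the empirical covariance operator $\Sigma_{n}:=\sum_{j=1}^{n}\phi(x_{j})\otimes\phi(x_{j})$ on the feature space $H_{k}$, where $\phi(x)=k(\cdot,x)$. The nonzero eigenvalues of $K_{XX}$ and of $\Sigma_{n}$ coincide, so $\sum_{i=m+1}^{n}\lambda_{i}(K_{XX})=\sum_{i>m}\lambda_{i}(\Sigma_{n})$. By the Ky Fan / Courant--Fischer variational principle for self-adjoint positive operators, this tail sum equals $\min_{P}\mathrm{tr}((I-P)\Sigma_{n}(I-P))=\min_{P}\sum_{j=1}^{n}\norm{\phi(x_{j})-P\phi(x_{j})}_{H_{k}}^{2}$, the minimum being over orthogonal projections $P$ of rank at most $m$.

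Rather than actually minimising, I would plug in one convenient choice: the projection $P_{m}$ onto the span of the top $m$ eigenvectors of the population operator $\Sigma:=\E_{x}[\phi(x)\otimes\phi(x)]$, which is a standard fact to share its nonzero eigenvalues $\lambda_{1}\ge\lambda_{2}\ge\cdots$ with the integral operator $\mathcal{K}$. This gives $\sum_{i=m+1}^{n}\lambda_{i}(K_{XX})\le\sum_{j=1}^{n}\norm{\phi(x_{j})-P_{m}\phi(x_{j})}_{H_{k}}^{2}$ for every sample; taking $\E_{\bm{x}}$, using that the $x_{j}$ are i.i.d. and linearity, then $\E_{x}[\phi(x)\otimes\phi(x)]=\Sigma$, collapses the right side to $n\,\E_{x}[\norm{\phi(x)-P_{m}\phi(x)}^{2}]=n\,\mathrm{tr}((I-P_{m})\Sigma)=n\sum_{i=m+1}^{\infty}\lambda_{i}$, since $P_{m}$ is exactly the eigenprojection of $\Sigma$ onto its top $m$ eigenvalues. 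Multiplying through by $(m+1)$ completes the chain.

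The main obstacle is functional-analytic bookkeeping rather than a deep idea: one must assume or verify enough regularity (e.g. $k$ continuous with $\E_{x}[k(x,x)]<\infty$, so that $\Sigma$ and $\mathcal{K}$ are trace-class) to guarantee a discrete spectrum, convergence of the infinite eigenvalue series, the identity $\E_{x}[\phi(x)\otimes\phi(x)]=\Sigma$ with $\mathrm{tr}((I-P_{m})\Sigma)=\sum_{i>m}\lambda_{i}$ in the infinite-dimensional setting, and a Fubini/Tonelli justification for exchanging $\E_{x}$ with the finite sum over $j$ and with the operator trace; the coincidence of the nonzero spectra of $K_{XX}$, $\Sigma_{n}$, $\Sigma$ and $\mathcal{K}$ also has to be invoked cleanly. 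Everything else is just the variational-principle-plus-suboptimal-projection trick.
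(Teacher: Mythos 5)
Your argument is correct. Note that the paper itself does not prove this statement — it is quoted verbatim from \cite{ratesofconvergesparsegp} — but your route (take $\E_{\bm{x}}$ of the pointwise bound of \cref{eigenvaluethm} for the first inequality, then bound the expected empirical tail sum via the Ky Fan variational characterisation with the suboptimal projection onto the top-$m$ eigenspace of the population covariance operator, whose nonzero spectrum coincides with that of $\mathcal{K}$) is exactly the standard proof used in that reference and its antecedents, including the trace-class caveat $\E_{x}[k(x,x)]<\infty$ you flag; you also correctly read the paper's $\lambda_{m}$ in the final sum as a typo for $\lambda_{i}$.
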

The previous bounds indicate that with respect to draws from the $m$-DPP and the input distribution, the average error of the matrix Nystrom approximation decays at the rate of the eigenvalue decay of the kernel integral operator. For common kernels such as the Gaussian RBF, one can analytically bound the eigenvalue decay \cite{ratesofconvergesparsegp} as follows 
\begin{align}\label{theoreticaleigdecay}
    \sum_{i=m+1}^{\infty}{\lambda_{i}} = \mathcal{O}(m\exp(-\alpha m^{\frac{1}{d}})),
\end{align}
for some $\alpha>0$ that is an increasing function of the kernel length-scale. 

Therefore, the larger the length-scale, the faster the eigenvalue decay and the better the Nystrom approximation for small $m$. For kernels with smaller length-scales and therefore slower eigenvalue decay, a higher $m$ will be required to make the average error smaller. 

While sampling from the $m$-DPP gives a convergence bound, sampling naively from this distribution is computationally intractable for large $n$ and moderate $m$. While there exist more sophisticated algorithms for sampling exactly from such distributions, they require computing an eigen-decomposition of the kernel matrix with cost $\mathcal{O}(n^{3})$, which is also computationally infeasible. One way to overcome this issue while retaining the theoretical guarantees is via MCMC sampling to sample from an $m$-DPP. The sampling scheme is given in \cref{MCMKDPP}.

\subsection{Ridge leverage score sampling for the Nystrom approximation}
An alternative, much faster method to ensure good quality of Nystrom approximation is the ridge leverage score (RLS) sampling technique for control points. In the notation of \cite{Musco}, given a kernel matrix $K:=K_{XX}$ and a fixed level of approximation $\lambda$, the RLS sampling technique, samples each point $x_{i} \in X$ with probability 
\begin{align*}
    l_{i}(\lambda)(K) = (K(K+\lambda I)^{-1})_{i,i},\quad i=1,\dots,n
\end{align*}
known as the ridge leverage score of the point $x_{i}$, and constructs the Nystrom approximation on the subsampled points $\bm{c}=\{c_{i}\}_{i=1}^{m}\subset X$. Denoting $\tilde{K}$ as the Nystrom approximation on these control points, it can be shown \cite{Musco} that the RLS sampling procedure gives the error bound
\begin{align*}
    \norm{K- \tilde{K} }_{2} \leq \lambda 
\end{align*}
with probability $1-\delta$, for randomised sample size satisfying $m=\mathcal{O}(d^{\lambda}\log(d^{\lambda}/\delta))$, where $d^{\lambda}:=\mathrm{tr}(K(K+\lambda I)^{-1})$. Note that we also have that the size $m$ is randomised as a function of $\lambda$, the error. If one sets $\lambda = \frac{1}{k}\sum_{k+1}^{n}{\lambda_{i}(K)}$, it may be shown \cite{Musco} that $d^{\lambda} = \mathcal{O}(k)$, so that 
\begin{align}\label{fulleigdecay}
    \norm{K- \tilde{K}}_{2} \leq \frac{1}{k}\sum_{k+1}^{n}{\lambda_{i}(K)}
\end{align}
with probability $1-\delta$ and randomised sample size $m=\mathcal{O}(k\log(k/\delta))$. Alternatively, in practice, one may sample exactly $m = \lceil (k\log(k/\delta))\rceil$ samples directly from the leverage score distribution without replacement, so that the above bounds hold approximately. This is often preferable in practice as the constants involved in the upper bound are large and may result in oversampling relative to a fixed approximation level.

Of course, computing the RLS scores $\{l_{i}\}_{i=1}^{n}$ is computationally infeasible due to the prohibitively large kernel matrices and inverses involved. As a result, there are a wide array of algorithms that have been developed in order to approximately sample from this distribution while still maintaining the error bounds up to a factor. Such algorithms require significantly less memory and computational cost, making them the algorithm of choice in practical situations for the Nystrom approximation. In this work, we consider two such sampling methods with complexity $\mathcal{O}(m^{2}n)$; the recursive method of \cite{Musco} and, more recently, the method of \cite{DAC}. 

For example, Algorithm $3$ of \cite{Musco} gives the following theoretical guarantee.
\begin{theorem}[\cite{Musco}]
    Fix $\delta \in (0,\frac{1}{32})$, and $S \in \N$. There exists a constant $c>0$ such that the recursive RLS sampling algorithm of \cite{Musco}, with probability $1-3\delta$ returns $m\leq cS\log(S/\delta)$ distinct control points $\bm{c}=\{c_{i}\}_{i=1}^{m}\subset X$ such that
    \begin{align}\label{RLSIneq}
        \norm{K - \tilde{K}}_{2} \leq \frac{1}{S}\sum_{i=S+1}^{n}{\lambda_{i}(K)}.
    \end{align}
\end{theorem}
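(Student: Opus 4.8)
The plan is to obtain this statement by specializing the generic Ridge Leverage Score guarantee recalled above to the particular ridge parameter
\begin{align*}
    \lambda := \frac{1}{S}\sum_{i=S+1}^{n}\lambda_{i}(K),
\end{align*}
which is exactly the choice already highlighted around \cref{fulleigdecay}. With this $\lambda$ fixed, the RLS sampling bound says that drawing control points $\bm{c}$ with probabilities proportional to (approximate) ridge leverage scores produces $\tilde{K}=K_{XC}K_{CC}^{-1}K_{CX}$ with $\norm{K-\tilde{K}}_{2}\leq\lambda$, and hence precisely \cref{RLSIneq}, on an event of probability at least $1-\delta$, where the number of sampled points satisfies $m=\mathcal{O}(d^{\lambda}\log(d^{\lambda}/\delta))$ and $d^{\lambda}=\mathrm{tr}(K(K+\lambda I)^{-1})=\sum_{i=1}^{n}\frac{\lambda_{i}(K)}{\lambda_{i}(K)+\lambda}$.

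The first substantive step is the deterministic bound $d^{\lambda}\leq 2S$. I would split the sum defining $d^{\lambda}$ at index $S$: for $i\leq S$ each summand is at most $1$, contributing at most $S$ in total; for $i>S$ I would use $\frac{\lambda_{i}(K)}{\lambda_{i}(K)+\lambda}\leq\frac{\lambda_{i}(K)}{\lambda}$ together with the definition of $\lambda$ to get $\sum_{i=S+1}^{n}\frac{\lambda_{i}(K)}{\lambda}=S$. Adding the two pieces gives $d^{\lambda}\leq 2S$, so $\log(d^{\lambda}/\delta)=\mathcal{O}(\log(S/\delta))$ and $m\leq cS\log(S/\delta)$ for a universal constant $c$; substituting the value of $\lambda$ into $\norm{K-\tilde{K}}_{2}\leq\lambda$ then reproduces the right-hand side of \cref{RLSIneq}.

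The genuinely technical ingredient — which I would import from \cite{Musco} rather than reprove — is that the Recursive RLS algorithm never sees the exact leverage scores: it estimates them through a chain of $\mathcal{O}(\log n)$ Nystrom approximations built on progressively coarser uniform subsamples. The key lemma of \cite{Musco} is that, level by level, the estimated $\lambda$-ridge leverage scores stay within a fixed constant factor of the true ones with high probability; a union bound over the $\mathcal{O}(\log n)$ recursion levels is what degrades the success probability from $1-\delta$ to $1-3\delta$ and inflates the constant in the sample-size bound, while leaving the spectral accuracy $\norm{K-\tilde{K}}_{2}\leq\lambda$ intact. Propagating both the multiplicative quality of the score estimates and the accumulated failure probability through the recursion is the main obstacle; once that is granted, the elementary eigenvalue bookkeeping of the previous paragraph closes the proof.
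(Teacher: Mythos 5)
Your argument is correct, and it matches what the paper itself does: the paper offers no proof of this statement, importing it directly as Theorem 3 of \cite{Musco}, while the discussion immediately preceding it already sketches exactly your key step (setting $\lambda = \frac{1}{S}\sum_{i=S+1}^{n}\lambda_{i}(K)$ and noting $d^{\lambda}=\mathcal{O}(S)$ via the split of $\mathrm{tr}(K(K+\lambda I)^{-1})$ at index $S$). Your reconstruction of the $d^{\lambda}\leq 2S$ bound and the deferral of the recursive score-estimation analysis to \cite{Musco} is the standard and correct route.
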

If one wants to construct an approximation with a number of control points approximately $m$, it suffices to choose $S$ such that $m\approx S\log(S/\delta)$, which would give the error bound on the right-hand side with high probability. Sampling using this algorithm is significantly faster than using MCMC samplers, and in practice gives good quality samples that ensure the Nystrom approximation remains small with strong theoretical guarantees. In practice, one may sample exactly $m =\lceil (S\log(S/\delta))\rceil$ samples directly from the approximate leverage score distribution without replacement, so that the above bounds hold approximately. This is implemented in \footnote{https://github.com/cnmusco/recursive-nystrom}, made available by the authors of \cite{Musco}. Again, we note that the constants in the upper bound of $m$ are large, and in practice, it suffices to take $m\approx (S\log(S/\delta))$.

Another efficient RLS approximation method is the divide-and-conquer (DAC) RLS approximation, which is Algorithm $2$ of \cite{DAC}. This algorithm also yields the same bound \cref{fulleigdecay} but with probability $1-\delta$ and $m \leq c(S + n(1-\alpha))\log(\frac{S + n(1-\alpha)}{\delta} )$, where $\alpha \in (0,1)$. The theory bound on $m$ here is looser than \cite{Musco} with the additive factor of $n(1-\alpha)$. While the theoretical guarantees of \cite{DAC} algorithm are not quite as tight as that of \cite{Musco}, in practice one observes that sampling $m\approx S\log(S/\delta)$ from the DAC leverage scores yields Nystrom approximations of similiar error to the method of \cite{Musco}, while being significantly faster in practice; especially in the case of $n\geq 10^{5}$. Furthermore, it is very easy to parallelise the DAC algorithm, in contrast to the recursive RLS scheme, thus gaining a further speedup. Therefore, even though we prove bounds in section \cref{mainresults} for the method of \cite{Musco}, the DAC method of \cite{DAC} is usually our algorithm of choice in practice for sampling control points for compression of large-scale currents and varifolds.

\section{Main results}\label{mainresults}

We now propose a method to compress discrete linear functionals of the form
\begin{align*}
    \mu = \sum_{i=1}^{n}{\delta_{x_{i}}\alpha_{i}} \in V^{*},\quad (x_{i},\alpha_{i}) \in \mathcal{X} \times \R^{p},
\end{align*}
where we take $\mathcal{X}$ to be a non-empty set, and assume a fixed RKHS $V \subset C(\mathcal{X},\R^{p} )$ induced by a scalar diagonal kernel $K(x,y) = k(x,y)I_{p}$. This choice of scalar diagonal kernel is by far the most popular in practical applications of currents and varifolds. In this section, we derive a method for a general $p\in \N$, and we shall apply this result to the special case of currents and varifolds in subsequent sections. The main techniques we employ are the Nystrom approximation and interpolation theory in RKHS, which allows us to obtain fast-decaying error bounds for the compression, while also giving a computationally cheaper approximation method.  

The compressed approximation to the full functional will take the form
\begin{align}\label{Deltaapprox}
   \hat{\mu} =  \sum_{i=1}^{m}{\delta_{c_{i}}\beta_{i} } ,\quad (c_{i},\beta_{i})\in \mathcal{X}\times\R^{p},
\end{align}
for appropriately chosen $c_{i},\beta_{i}$ and $m\ll n$ such that the error is bounded above by a small constant:
\begin{align}\label{errorbound}
    \norm{\mu-\hat{\mu}  }_{V^{*}} < \epsilon.
\end{align}
Note that we wish to approximate in the discrete delta form \eqref{Deltaapprox}, as it will allow us to use the exact dual metric computation formula \cref{DiscreteCurrMet} on the compressed form, with $m\ll n$ terms. This reduces dual metric computation cost, from $\mathcal{O}(n^{2})$ to $\mathcal{O}(m^{2})$, which gives significant saving when $m\ll n$.

Henceforth, we denote by $L: V \longrightarrow V^{*}$ the isometric duality mapping of the RKHS $V$. By the reproducing property of the kernel functions, it is a standard fact that the following equality holds:
\begin{align}\label{dualityidentity}
    L^{-1}({\delta_{c}}w ) =   k(\cdot,c )w \in V,\quad (c,w) \in \mathcal{X}\times \R^{p},
\end{align}
which is the dual vector field in $V$ to the linear functional $\delta_{c}w$. Using the Riesz representation theorem and the explicit form \cref{dualityidentity} of the isometric duality map in RKHS, the proposed approximation of discrete functionals is equivalent to approximating functions in $V$ of the form:
\begin{align*}
    v^{\alpha}(x) := \sum_{i=1}^{n}{k(x,x_{i})\alpha_{i}} =  L^{-1}\bigg(\sum_{i=1}^{n}{\delta_{x_{i}}\alpha_{i}} \bigg),
\end{align*}
with compact approximations of the form,
\begin{align*}
     v_{m}(x) := \sum_{i=1}^{m}{k(x,c_{i})\beta_{i}} = L^{-1}\bigg(\sum_{i=1}^{m}{\delta_{c_{i}}\beta_{i}} \bigg),
\end{align*}
with size $m\ll n$.
\subsection{Algorithms and theoretical guarantees}
We now describe our proposed algorithm for compressing discrete functionals in RKHS in the following. 
\begin{algorithm}[H]
\caption{Discrete functional compression}\label[algorithm]{FunctionalCompressionAlg}
\begin{algorithmic}[1]
\State Fix input domain $\mathcal{X}$, $m \ll n$, RKHS $V$, scalar kernel function $k:\mathcal{X}\times \mathcal{X} \longrightarrow \R$ , a sampling algorithm for control points, and target functional
\begin{align*}
    \mu = \sum_{i=1}^{n}{\delta_{x_{i}}\alpha_{i}},\quad (x_{i},\alpha_{i}) \in \mathcal{X}\times \R^{p}.
\end{align*}

\State Subsample $m$ distinct control points $\{c_{i}\}_{i=1}^{m} \subset \{x_{i}\}_{i=1}^{n}$ from the chosen sampling algorithm for control points.
\State Evaluate the dual vector valued function $L^{-1}(\mu)$ on the control points as
\begin{align*}
y_{j} = \sum_{i=1}^{n}{k(c_{j},x_{i})\alpha_{i}},\quad j=1,\dots,m.
\end{align*}
\State Form an approximation $\hat{\mu}$ to $\mu$ via orthogonal projection onto the control-point subspace
\begin{align*}
    \hat{\mu} = \sum_{i=1}^{m}{\delta_{c_{i}}\beta_{i}},\quad \beta = [\beta_{1},\dots,\beta_{m}]^{T}= K_{CC}^{-1}y \in \R^{m\times p}.
\end{align*}
where $y=[y_{1},\dots,y_{m}]^{T} \in \R^{m\times p}$.
\end{algorithmic}
\end{algorithm}

Depending on the algorithm used for sampling control points, \cref{FunctionalCompressionAlg} yields different theoretical guarantees, which we now describe.
\begin{theorem}
\label{OrthogprojlemmaCurr}
     Suppose we have a discrete target functional of the form
    \begin{align*}
        \mu_{S}=\sum_{i=1}^{n}{\delta_{x_{i}}\alpha_{i}}  \in V^{*},\quad (x_{i},\alpha_{i}) \in \mathcal{X}\times \R^{p}
    \end{align*}
with associated dual vector-valued function
\begin{align*}
   v^{\alpha}(x) = \sum_{i=1}^{n}{k(x,x_{i})\alpha_{i}} \in V.
\end{align*}
Subsample $m$ distinct control points $\bm{c}=\{c_{i}\}_{i=1}^{m} \subset \{x_{i}\}_{i=1}^{n}$ and define the matrix of values 
    \begin{align*}
        Y_{c} = (v^{\alpha}(c_{1}),\dots,v^{\alpha}(c_{m}))^{T} \in \R^{m\times p},
    \end{align*}
    which is the evaluation of the dual vector-valued function on the control-point locations. 
    Computing weights
    \begin{align*}
        \beta = [\beta_{1},\dots,\beta_{m}]^{T}= K_{cc}^{-1}Y_{c} \in \R^{m\times p},
    \end{align*}
    yields an approximation 
    \begin{align*}
         \hat{\mu}_{S} =  \sum_{i=1}^{m}{\delta_{c_{i}}\beta_{i} } 
    \end{align*} 
    that satisfies
       \begin{align*}
      \norm{\mu_{S}-\hat{\mu}_{S}}_{V^{*}}^{2} \leq C\mathrm{tr}(K_{XX}-Q_{XX}).
    \end{align*}
\begin{enumerate}
\item Sampling control points from an $m$-DPP on the kernel matrix $K_{XX}$, yields the following in-expectation error bounds
   \begin{align*}
         \E_{\bm{c}}\norm{\mu_{S}-\hat{\mu}_{S} }_{V^{*}}^{2} \leq C(m+1)\sum_{i=m+1}^{n}{\lambda_{i}(K_{XX})}.
    \end{align*}
\item By sampling control points using the recursive RLS scheme of \cite{Musco}, we obtain similar theoretical guarantees (up to a factor). Fixing $\delta\in (0,\frac{1}{32})$, $S\in \N$, with probability $1-3\delta$,  we have $m\leq cS\log(S/\delta)$ and
   \begin{align*}
        \norm{\mu_{S}-\hat{\mu}_{S}}_{V^{*}}^{2} \leq \frac{Cn}{S}\sum_{i=S+1}^{n}{\lambda_{i}(K_{XX})}.
    \end{align*}
\end{enumerate}
\end{theorem}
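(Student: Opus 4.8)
The plan is to reduce the claimed bound on $\norm{\sum_i \delta_{x_i}\hat\alpha_i - \sum_i \delta_{c_i}\beta_i}_{V^*}^2$ to Theorem~\ref{thmox} (the Nystrom KRR error bound of \cite{connections}) by recognising the control-point approximation produced by \cref{FunctionalCompressionAlg} as, essentially, a Nystrom KRR solution, and then feeding in the eigenvalue/trace bounds of \cref{eigenvaluethm} and of \cite{Musco} to get the two itemised consequences. First I would use the isometry $L:V\to V^*$, so that $\norm{\sum_i\delta_{x_i}\hat\alpha_i - \sum_i\delta_{c_i}\beta_i}_{V^*} = \norm{v^\alpha - v_m}_{V}$, where $v^\alpha(x)=\sum_{i=1}^n k(x,x_i)\alpha_i$ and $v_m(x)=\sum_{j=1}^m k(x,c_j)\beta_j$. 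Since the kernel is the scalar-diagonal kernel $K(x,y)=k(x,y)I_d$, the RKHS $V$ splits as a $d$-fold product of the scalar RKHS $H_k$, and the norm decomposes coordinatewise: $\norm{v^\alpha - v_m}_V^2 = \sum_{r=1}^d \norm{v^{\alpha,(r)} - v_m^{(r)}}_{H_k}^2$, where the superscript $(r)$ denotes the $r$-th component and $v^{\alpha,(r)} = \sum_i k(\cdot,x_i)\alpha_i^{(r)}$, $v_m^{(r)} = \sum_j k(\cdot,c_j)\beta_j^{(r)}$.

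Next I would identify, for each coordinate $r$, the function $v_m^{(r)}$ with the orthogonal projection of $v^{\alpha,(r)}$ onto the subspace $M=\mathrm{span}\{k(\cdot,c_1),\dots,k(\cdot,c_m)\}$. The algorithm sets $y_j = v^\alpha(c_j) = \sum_i k(c_j,x_i)\alpha_i$ and $\beta = K_{CC}^{-1}Y_c$; the key identity is that the $H_k$-orthogonal projection $P_M g$ of any $g\in H_k$ onto $M$ is the unique element of $M$ agreeing with $g$ at the control points (interpolation), and its coefficient vector in the basis $\{k(\cdot,c_j)\}$ is exactly $K_{CC}^{-1}(g(c_1),\dots,g(c_m))^T$. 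Hence $v_m^{(r)} = P_M v^{\alpha,(r)}$. At this point I can invoke Theorem~\ref{thmox} with $\lambda\to 0$, or more cleanly its underlying mechanism: the projection error of $v^{\alpha}=\sum_i k(\cdot,x_i)\alpha_i$ onto $M$ is controlled by $\norm{v^\alpha - P_M v^\alpha}_{H_k}^2 = \alpha^T(K_{XX}-Q_{XX})\alpha$ with $Q_{XX}=K_{XC}K_{CC}^{-1}K_{CX}$ — this is the standard computation expressing the squared distance to $M$ via the Schur complement $K_{XX}-Q_{XX}$ of the joint kernel matrix. Summing over $r$ and bounding $\sum_r \alpha^{(r)T}(K_{XX}-Q_{XX})\alpha^{(r)} \le \norm{A}_2\,\mathrm{tr}(K_{XX}-Q_{XX})$, where the rows of $A$ are the $\alpha_i$, or more simply $\le \lambda_{\max}(K_{XX}-Q_{XX})\sum_r\norm{\alpha^{(r)}}^2$, and absorbing $\norm{A}^2_2$ (a fixed quantity determined by the target $\mu_S$) into the constant $C$, yields the claimed $\norm{\cdot}_{V^*}^2 \le C\,\mathrm{tr}(K_{XX}-Q_{XX})$. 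The two enumerated items then follow immediately: item~1 by taking expectation and applying \cref{eigenvaluethm}, $\E_{\bm c}[\mathrm{tr}(K_{XX}-Q_{XX})]\le (m+1)\sum_{i=m+1}^n\lambda_i(K_{XX})$; item~2 by applying the Recursive RLS guarantee \cref{RLSIneq} with $\norm{K_{XX}-Q_{XX}}_2 \le \frac1S\sum_{i=S+1}^n\lambda_i$ and bounding $\mathrm{tr}(K_{XX}-Q_{XX})\le n\norm{K_{XX}-Q_{XX}}_2$, which produces the extra factor of $n$.

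The main obstacle I anticipate is making the identification "$v_m = P_M v^\alpha$" fully rigorous — i.e. verifying that the coefficient formula $\beta = K_{CC}^{-1}Y_c$ coming from the algorithm really is the orthogonal projection coefficient and not merely an interpolant in some weaker sense — and tracking the constant $C$: the passage from the coordinatewise quadratic forms $\alpha^{(r)T}(K_{XX}-Q_{XX})\alpha^{(r)}$ to $C\,\mathrm{tr}(K_{XX}-Q_{XX})$ needs the dependence on the target weights $\{\alpha_i\}$ to be isolated cleanly and shown to be independent of $m$ and of the control-point sampling, so that the subsequent probabilistic/expectation bounds apply verbatim. A secondary technical point is the reduction from the vector-valued RKHS $V$ to $d$ copies of $H_k$; this is routine for the scalar-diagonal kernel but should be stated carefully, since Theorems~\ref{thmox}–\ref{eigenvaluethm} are phrased for scalar kernels. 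Everything else is bookkeeping with Schur complements and the already-cited bounds.
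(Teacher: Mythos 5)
Your proposal is correct, and for the central step it takes a genuinely different --- and more direct --- route than the paper. The paper first constructs an auxiliary control-point field via the Nystrom KRR machinery (\cref{thmox}): each component of $v^{\alpha}$ is rewritten as a ridge-regression solution with modified targets $\tilde y_j=(K_{XX}+\mu I)K_{XX}^{-1}y_j$, which yields a suboptimal element $v^{\bm c,\mu}$ of the control-point subspace satisfying the trace bound (\cref{fieldtraceapprox}); the orthogonal projection $\mathcal P_c v^{\alpha}$ is then bounded by \emph{optimality}, i.e.\ $\norm{v^{\alpha}-\mathcal P_c v^{\alpha}}_V^2\le\norm{v^{\alpha}-v^{\bm c,\mu}}_V^2\le C\,\mathrm{tr}(K_{XX}-Q_{XX})$. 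You instead compute the projection error exactly by Pythagoras: with $\beta=K_{CC}^{-1}K_{CX}\alpha^{(r)}$ one gets $\norm{v^{\alpha,(r)}-P_Mv^{\alpha,(r)}}_{H_k}^2=\alpha^{(r)T}(K_{XX}-Q_{XX})\alpha^{(r)}$, and then bound the quadratic form by $\lambda_{\max}(K_{XX}-Q_{XX})\norm{\alpha^{(r)}}^2\le\mathrm{tr}(K_{XX}-Q_{XX})\norm{\alpha^{(r)}}^2$. Your route buys an exact identity, a cleaner constant $C=\sum_r\norm{\alpha^{(r)}}^2$ that manifestly depends only on the target weights (hence is independent of $m$ and of the control-point draw, which is exactly what the subsequent expectation and high-probability steps require), no auxiliary parameter $\mu$, and no implicit invertibility assumption on $K_{XX}$ (which the paper's $\tilde y_j=(K_{XX}+\mu I)K_{XX}^{-1}y_j$ does need). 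What the paper's route buys is that the trace bound is inherited wholesale from the cited result of \cite{connections}, and the intermediate Nystrom-weight approximation (\cref{Corollary_trace}) is reused elsewhere (the trace heuristic for choosing $m$). The remaining steps --- coordinatewise splitting via the scalar-diagonal kernel, the isometry $L$, the derivation of $\beta=K_{CC}^{-1}Y_c$ from the orthogonality conditions, and the two endgames via \cref{eigenvaluethm} and $\mathrm{tr}\le n\norm{\cdot}_2$ plus \cref{RLSIneq} --- coincide with the paper's. The only blemish is the notational slip $\norm{A}_2$ versus $\norm{A}_2^2$ in your trace estimate, which does not affect the argument.
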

We observe that the randomised projection scheme yields a strong error-decay bound, at least as fast as the rate of decay of eigenvalues of the kernel matrix. Furthermore, the proposed approximation scheme presented in \cref{FunctionalCompressionAlg} does not require any optimisation, and only requires a single sampling and projection operation. As we shall see in \cref{Numexpsect}, the resulting algorithm is rapid compared to existing compression methods. We also remark here that almost identical bounds to those proven for the recursive RLS scheme also hold when sampling control points with the DAC RLS sampler of \cite{DAC}, with slightly looser bounds on $m$. 

We may also relate the decay of error to the decay of eigenvalues of the kernel integral operator $\mathcal{K}_{p}$ itself, where $p$ denotes a sampling distribution for the delta centres $x_{i} \in \mathcal{X}$. This is summarised in the following corollary,
\begin{corollary}\label[corollary]{corrcontinuous}
Suppose that the Dirac delta centres $\{x_{i}\}_{i=1}^{n} \in \mathcal{X}$ of the target $\mu \in V^{*}$ are sampled from a continuous distribution with density $p$. Then, the rate of decay of the $m$-DPP and RLS approximations in \cref{OrthogprojlemmaCurr} are bounded in expectation and with high probability, respectively, by the rate of decay of the sum of decreasing ordered eigenvalues of $\mathcal{K}_{p}$, 
\begin{align*}
    f(m) = \sum_{i=m+1}^{\infty}{\lambda_{i}}
\end{align*}
In the special case where the input density $p$ is a mixture of $n$ Gaussians, each centred on some $c_{i}$ for $i \in [1,n]$, and the RKHS kernel is the Gaussian RBF, this yields the following exponential decay rate as a function of $m$
\begin{align*}
     f(m) = \mathcal{O}(m\exp(-\alpha m^{\frac{1}{d}})),\quad \alpha>0,
\end{align*}
where $\alpha$ is a distribution-dependent constant. 
\end{corollary}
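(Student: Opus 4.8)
The plan is to push the Gram-matrix estimates of \cref{OrthogprojlemmaCurr} through the averaging argument of \cite{ratesofconvergesparsegp} that trades eigenvalues of the $n\times n$ matrix $K_{XX}$ for eigenvalues of the kernel integral operator $\mathcal{K}_{p}$. \cref{OrthogprojlemmaCurr} already controls the squared error by $C\,\mathrm{tr}(K_{XX}-Q_{XX})$, and specializes this to $C(m+1)\sum_{i=m+1}^{n}\lambda_{i}(K_{XX})$ in expectation over an $m$-DPP and to $\tfrac{Cn}{S}\sum_{i=S+1}^{n}\lambda_{i}(K_{XX})$ with probability $1-3\delta$ under Recursive RLS, where $C$ is the data-dependent constant of \cref{OrthogprojlemmaCurr}. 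Since the Dirac centres $\{x_{i}\}$ are now assumed i.i.d.\ from $p$, the only remaining work is to average these right-hand sides; no further structure of currents or varifolds enters.

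First I would handle the $m$-DPP bound. Taking the expectation over $\{x_{i}\}$ of \cref{OrthogprojlemmaCurr}(1) and applying inequality \eqref{eigdecay}, namely $\E_{\bm{x}}\big[\sum_{i=m+1}^{n}\lambda_{i}(K_{XX})\big]\le n\sum_{i=m+1}^{\infty}\lambda_{i}$ with the $\lambda_{i}$ on the right the eigenvalues of $\mathcal{K}_{p}\colon L^{2}(\mathcal{X},p\,dx)\to L^{2}(\mathcal{X},p\,dx)$, bounds the expected squared error by $C\,n(m+1)\sum_{i=m+1}^{\infty}\lambda_{i}=C\,n(m+1)f(m)$. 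For the Recursive RLS bound I would condition on $\{x_{i}\}$ — where \cref{OrthogprojlemmaCurr}(2) holds with probability $1-3\delta$ over the sampled control points and realized size $m\le cS\log(S/\delta)$ — take the expectation over $\{x_{i}\}$ of $\tfrac{n}{S}\sum_{i=S+1}^{n}\lambda_{i}(K_{XX})$, and again invoke \eqref{eigdecay} with $S$ in place of $m$ to get $\tfrac{Cn^{2}}{S}\sum_{i=S+1}^{\infty}\lambda_{i}=\tfrac{Cn^{2}}{S}f(S)$; a Markov step on $\sum_{i=S+1}^{n}\lambda_{i}(K_{XX})$ together with a union bound then converts this into a genuine high-probability statement over both sources of randomness. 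The identical argument applies to the DAC sampler of \cite{DAC} with $S$ replaced by the quantity $S+n(1-\theta)$, $\theta\in(0,1)$, appearing there.

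The step needing care — and the main obstacle, modest as it is — is the meaning of \enquote{rate of decay} in the statement: the displays above carry extra factors $n(m+1)$ (resp.\ $n^{2}/S$, together with the factor from the Markov step), and in the RLS case are indexed by the internal budget $S$ rather than the realized size $m$. Since $m$ and $S$ coincide up to logarithmic factors, monotonicity of $f$ gives $f(S)\le f\big(c'm/\log(m/\delta)\big)$, so whenever $f$ decays faster than every inverse power of $m$ — precisely the regime of interest — these polynomial prefactors and the logarithmic loss in the argument are absorbed into the constant and, in the exponential case, into an arbitrarily small worsening of the decay exponent. This is the sense in which both the in-expectation ($m$-DPP) and high-probability (RLS) errors decay at rate $f(m)=\sum_{i=m+1}^{\infty}\lambda_{i}$. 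I would also record the standing assumption that $\mathcal{K}_{p}$ is trace class so that these tail sums converge, which holds in all cases considered.

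Finally, for the Gaussian specialization I would simply quote the analytic spectral estimate \eqref{theoreticaleigdecay} of \cite{ratesofconvergesparsegp}: when $k$ is a Gaussian RBF kernel on $\R^{d}$ and $p$ is a Gaussian, or more generally a Gaussian-mixture, density, the eigenvalues of $\mathcal{K}_{p}$ obey $\sum_{i=m+1}^{\infty}\lambda_{i}=\mathcal{O}\big(m\exp(-\alpha m^{1/d})\big)$ for a $(k,p)$-dependent $\alpha>0$. Substituting this $f$ into the two displays of the previous paragraphs and absorbing the prefactors as described yields $f(m)=\mathcal{O}(m\exp(-\alpha m^{1/d}))$ in both settings, which is the claimed corollary.
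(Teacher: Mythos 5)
Your proposal follows essentially the same route as the paper: reduce both bounds to the eigenvalue tail $\sum_{i>m}\lambda_{i}(K_{XX})$, average over the i.i.d.\ centres via \eqref{eigdecay}, use Markov plus a union bound for the high-probability RLS case, and then quote the Gaussian spectral estimate \eqref{theoreticaleigdecay}; your explicit tracking of the $n(m+1)$ and $n^{2}/S$ prefactors and of the $S$-versus-$m$ indexing is if anything more careful than the paper's own argument. The only elision is the Gaussian-mixture case, which you assert directly, whereas the paper justifies it by citing a lemma that bounds the eigendecay of a mixture by that of a single Gaussian of sufficiently large variance before applying \eqref{theoreticaleigdecay}.
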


In the case of currents and varifolds, a Gaussian mixture distribution over delta centres as in \cref{corrcontinuous}, encompasses the situation where the observed triangle centre and normal pairs $x_{i}$ of the underlying triangulation, are modelled as a noisy sample of true centres and normals $c_{i}$ from a noiseless ground truth shape.

We now discuss the application of 
\cref{FunctionalCompressionAlg} and \cref{OrthogprojlemmaCurr} to the compression of the currents and varifolds representations of shape.
\subsection{Application to compression of currents and varifolds}\label{CompressionofVarifolds}
Given a discretised surface $S \subset \R^{d}$ with $n=n_{T}$ triangles, we know that one can construct discrete currents and varifolds of the form
\begin{align*}
    \mu_{S} = \sum_{i=1}^{n}{\delta_{x_{i}}\alpha_{i}} \in V^{*} ,\quad (x,\alpha_{i})\in \mathcal{X}\times\R^{p},
\end{align*}
in order to perform downstream tasks requiring metric shape comparison, without correspondences. As before, $V$ is an RKHS induced by scalar diagonal kernel $K(x,y)=k(x,y)I_{p}$, with $k:\mathcal{X}\times \mathcal{X} \longrightarrow \R_{\geq 0}$. We can apply the approximation methods of the previous section in order to compress a given current or varifold in a sparse basis. It remains to make precise the forms of the underlying space $\mathcal{X}$, and the pairs $(x_{i},\alpha_{i}) \in \mathcal{X} \times \R^{p}$ for each representation.
\begin{enumerate}
    \item  For \textbf{currents}, we have that $\mathcal{X}=\R^{d}$, $p=d$ and for $i=1,\dots,n_{T},$
\begin{align*}
x_{i}:=\frac{1}{3}(v_{i1} + v_{i2} + v_{i3}),\quad \alpha_{i} := \frac{1}{2}(v_{i3}-v_{i2} ) \times ( v_{i2}-v_{i1}),
\end{align*}
so $x_{i},\alpha_{i}$ are the $i$'th triangle centre and (un-normalised) normal vector, respectively.

\item For \textbf{varifolds} the base space is changed to $\mathcal{X}=\R^{d} \times \mathbb{S}^{d-1}$, $p=1$, and for $i=1,\dots,n_{T},$,
\begin{align*}
    x_{i} \coloneqq \bigg(\frac{1}{3}(v_{i1} + v_{i2} + v_{i3}),\frac{\nu_{i}}{\norm{\nu_{i}}}\bigg),\quad \alpha_{i} \coloneqq \norm{\nu_{i}},\; \nu_{i}\coloneqq\frac{1}{2}(v_{i3}-v_{i2} ) \times ( v_{i2}-v_{i1})
\end{align*}
where $x_{i}$ is a tuple of $i$'th triangle centre and (normalised) normal vector, and $\alpha_{i}$ is the $i$'th triangle area.
\end{enumerate}
With these choices, applying the compression algorithm of the previous section will allow us to compress currents and varifolds within the same framework, while retaining strong theoretical error bounds. Indeed, \cref{OrthogprojlemmaCurr} and \cref{corrcontinuous} imply that for the common Gaussian kernel setting, one can obtain compressions of currents and varifolds associated to noisy shape samples with exponentially decaying compression error. As we shall see in \cref{Numexpsect}, the resulting compression algorithm is rapid in practice in addition to the theoretical guarantees.

\subsection{Choice of $m$}\label{choiceofm}
In the algorithms we have presented so far, the choice of the compression level $m$ is a crucial one.  In theory, one can set the eigenvalue bounds presented in \cref{OrthogprojlemmaCurr} to a pre-defined tolerance and choose $m$ accordingly. However, in practice, the cost of evaluating the upper bound is $\mathcal{O}(n^{3})$ due to the eigen-decomposition cost, which is undesirable. 


As a cheaper, practical alternative, we propose to use the preliminary trace bound of \cref{Corollary_trace} instead for this task. In particular, one can use the trace error of the Nystrom approximation as an indicator for the true compression error given $m$ control points. For shift invariant kernels, computation of the trace term simplifies as,
\begin{align*}
    tr(K_{XX}-Q_{XX}) = n - tr(K_{XC}K_{CC}^{-1}K_{CX}),
\end{align*}
which has a computational cost of at most $m^{3} + nm^{2}$, which is significantly less than $n^{3}$ of the eigen-decomposition.

One can choose $m$ by setting a pre-defined tolerance $\epsilon>0$ and increasing the number of RLS samples $m$, until the change in the trace bound is below this tolerance. This is a sensible choice, as we observe in practice (see \cref{Numexpsect}) that the decay of the true compression error closely matches that of the trace bound, which makes it a suitable proxy for the error. This is given as \cref{traceheuristic}. Note that this procedure is similar to the use of trace bounds for judging the quality of inducing points in sparse Gaussian process methods \cite{ratesofconvergesparsegp}.
\begin{algorithm}[H]
\caption{Trace bound heuristic for choosing $m$}\label{traceheuristic}
\begin{algorithmic}[1]
\State Fix error tolerance $\tau>0$, $e :=+\infty$ and $m=0$.
\While{$e > \tau$}{
\State $m \gets m+1$
\State RLS sample $m$ control points $\bm{c}=\{c_{i}\}_{i=1}^{m}$.
\State Set $e = n - tr(K_{XC}K_{CC}^{-1}K_{CX}) $   
\EndWhile
}
\end{algorithmic}
\end{algorithm}

\subsection{Application to control-point LDDMM matching}

Thus far, we have derived a randomised compression algorithm for large-scale currents and varifolds with theoretical guarantees on the quality of the approximation error. We now describe how the compressed measures can be used for the downstream task of nonlinear shape registration in the LDDMM framework, one of the areas in which these metrics are most commonly used. A typical workflow in the application of LDDMM to computational anatomy is to register two anatomical triangulated surfaces $T,S \subset \R^{d}$ (with $\lvert T\rvert,\lvert S\rvert$ triangles, respectively) via a minimal energy diffeomorphism $\varphi$. Such diffeomorphisms are typically induced as the flow $\varphi_{01}^{v}$ of time-dependent vector fields $v: [0,1] \times \R^{d} \longrightarrow \R^{d}$ such that $v(t,\cdot) \in V_{k}$, where $V_{k}$ is an RKHS of vector fields induced by a smooth positive definite spatial kernel $k$.

In practice, one often performs matching via a spatial control-point formulation of LDDMM, by minimising an objective $E: \R^{Pd}  \longrightarrow \R_{\geq 0}$, defined by the following system
\begin{align}\label{controlobjectiveintro}
E(\bm{\alpha}) := \frac{1}{2}\norm{v^{\bm{\alpha} }(0)}_{V_{k}}^{2} + \frac{\lambda}{2}\norm{\mu_{\varphi^{v^{\bm{\alpha} }}_{01}(T) } - \mu_{S} }_{V^{*}},\quad \bm{\alpha} =(\alpha_{1},\dots,\alpha_{P} ) \in \R^{Pd},
\end{align}
\begin{align}\label{controlpointfieldintro}
v^{\bm{\alpha} }(t,x) := \sum_{i=1}^{{P}}{k(x,c_{i}^{s}(t))\alpha_{i}(t)},
\end{align}
\begin{align}\label{controlpointHamiltonianintro}
\begin{cases}
 \partial_{t}c_{i}^{s}(t) = \sum_{j=1}^{P}{k(c_{i}^{s}(t),c_{j}^{s}(t))\alpha_{j}(t)},\quad c_{i}^{s}(0) = c_{i}^{s},\quad i=1,\dots,P\\
 \partial_{t}\alpha_{i}(t) = -\sum_{j=1}^{P}{\nabla_{1}k(c_{i}^{s}(t),c_{j}^{s}(t))\alpha_{i}(t)^{T}\alpha_{j}(t)},\quad \alpha_{i}(0) = \alpha_{i},\quad i=1,\dots,P
\end{cases}
\end{align}
The fields in \cref{controlpointfieldintro} are parametrised in terms of $P\ll \lvert T\rvert$ spatial deformation control points $\bm{c}^{s}:=\{c_{i}^{s}\}_{i=1}^{P} \in \R^{Pd}$ and initial `momenta' $\bm{\alpha}:=\{\alpha_{i}\}_{i=1}^{P} \in \R^{Pd}$ which completely determine the dynamics of the Hamiltonian system \cref{controlpointHamiltonianintro}. By minimising the objective \cref{controlobjectiveintro} as a function of the initial momenta, one seeks to find minimal energy time-dependent vector fields (satisfying \cref{controlpointHamiltonianintro}) deforming the template to target under the flow. The energy of the flow is penalised in the first term, and the discrepancy in the second term, measured in the dual norm on currents or varifolds. Note the spatial deformation control points $\bm{c}^{s}$ parametrising the diffeomorphism and compression control points ${\bm{c}}$ for measure compression in \cref{LDDMMcompalg} are distinct, and do not in general lie in the same space (e.g. for varifolds). We henceforth refer to the minimisation of \cref{controlobjectiveintro} as the \tit{uncompressed} matching problem.
 
 Computing the objective \cref{controlobjectiveintro} has two parts: flow computation and metric computation. One must also compute the gradients of these components for the optimisation procedure. It is common practice \cite{GlaunesCurrents} to approximate $\varphi(T)$, by computing the pushforward of the $n_{v}$ vertices of $T$, and forming a new triangulation from the deformed vertices with the \tit{same} mesh connectivity as the original template. This approximation has negligible error for finely sampled shapes. Therefore, at each step of the ODE solver for the flow, one computes a reduction at cost $\mathcal{O}(Pn_{v} )$, which may be implemented efficiently using KeOps, and is fast as one takes $P\ll n_{v} < \lvert T \rvert$ in the control point parametrisation of deformations. In practice, one also usually only requires simple first or second-order solvers with few evaluations for the flow. This leaves the registration discrepancy term of \cref{controlobjectiveintro}, which becomes a dominating cost with metric and gradient cost/memory still scaling as $\mathcal{O}(\lvert T\rvert\lvert S\rvert)$. This becomes especially noticeable in the group-wise setting when many shapes are being registered. 
 
 This is where our work may be applied to yield a speedup. Firstly, since the target $S$ does not change from iteration to iteration, one can compress $\mu_{S}$ using \cref{FunctionalCompressionAlg} as a form of pre-processing before running the registration algorithm. Then, the main limiting quadratic cost remaining is that of the metric terms involving $\mu_{\varphi^{\bm{\alpha}}(T)}$, which changes from iteration to iteration.

There are two options at this stage. The first is to compute the compression of $\mu_{\varphi(T)}$ using \cref{FunctionalCompressionAlg} at each iteration and make the metric comparison on the compressed measures. The second, is to compress $\mu_{T}$ itself offline, on a set of $m$ \textit{compression control points} $\bm{c}=\{c_{i}\}_{i=1}^{m}$ subsampled from the delta centres of $\mu_{T}$, and compute a push-forward of the compressed measure at each iteration as $\varphi_{\#}(\hat{\mu}_{T})$. With this pushforward one may compute the metric against the compressed target. 

There are trade-offs in either case. In the first case, {at each iteration}, we have to compute both the leverage score approximation and the orthogonal projection, with cost scaling as $\mathcal{O}(nm^{2})$ and $\mathcal{O}(m^{3})$, respectively. In the second case, one only has to compute the compression \textbf{once} offline and only push forward the compressed measure at every iteration. However, computing the push-forward of $\hat{\mu}_{T}$ under $\varphi$ requires computing the Jacobian of $\varphi$, which involves \cite{DURRLEMAN,VarCompression} solving another set of coupled ODEs. This can be expensive depending on the problem setup.

Instead, we observe if one were to compress $\mu_{T}$ on $\bm{c}=\{c_{i}\}_{i=1}^{m}$, the pushforward of $\hat{\mu}_{T}$ is supported on $\varphi(\bm{c}) \in \R^{md}$, the $m$ transported compression control points; see \cite{DURRLEMAN}, \cite{VarCompression} for this result on currents and varifolds, respectively. Motivated by this, instead of computing the measure push-forward $\varphi_{\#}\hat{\mu}_{T}$ explicitly per-iteration, we deform the full template $T$ under the spatial deformation control-point fields \cref{controlpointfieldintro} at cost $\mathcal{O}(Pn_{v})$, and subsequently perform the orthogonal projection of $\mu_{\varphi(T)}$ on the points $\varphi(\bm{c})$. That is, at each iteration, we compute the projection of the deformed template as $\mathcal{P}_{\varphi(\bm{c})}(\mu_{\varphi(T)})$ and use it for metric computation. The resulting algorithm avoids repeated RLS sampling and deformation Jacobian computations, but does require pushing forward the dense template. However, we note that the template pushforward operation has reduced cost anyway, in the spatial control-point LDDMM framework.

The proposed \textit{compressed matching} algorithm, making use of \cref{FunctionalCompressionAlg} is given below, which we use for our registration experiments in \cref{Numexpsect}. 

\begin{algorithm}[H]
\caption{Compressed matching algorithm.}\label[algorithm]{LDDMMcompalg}
\begin{algorithmic}[1]
\State Fix number of optimisation steps $N$, $i=0$, template $T$, target $S$, and compressed target measure $\hat{\mu}_{S}$.
\State Initialise spatial deformation control points $\bm{c}^{s} \in \R^{Pd}$ and associated initial momenta $\bm{\alpha} \in \R^{Pd}$.
\State  Apply RLS sampling from delta centres of $\mu_{T}$ to obtain compression control points ${\bm{c}}=\{c_{i}\}_{i=1}^{m}$. 

\While{$i<N$}{
\State Push forward template under the current estimated diffeomorphism,
\begin{align*}
\varphi^{\bm{\alpha}}:=\varphi^{v^{\bm{\alpha} }}_{01}
\end{align*}
as defined by \cref{controlpointfieldintro}, to obtain $\mu_{\varphi^{\bm{\alpha}}(T)}$
\State Project $\mu_{\varphi^{\bm{\alpha}}(T)}$ onto points $\varphi^{\bm{\alpha}}({\bm{c}})$, to obtain $\mathcal{P}_{\varphi^{\bm{\alpha}}(\bm{c})}(\mu_{\varphi^{\bm{\alpha}}(T)})$.
\State Compute `compressed objective'
\begin{align}\label{compobj}
E({\bm{\alpha}}) = \frac{1}{2}{\norm{v^{\bm{\alpha}}(0)}_{V_{k}} ^{2} } + \frac{\lambda}{2}\norm{\hat{\mu}_{S} - \mathcal{P}_{\varphi^{\bm{\alpha}}(\bm{c})}(\mu_{\varphi^{\bm{\alpha}}(T)})}_{V^{*}}^{2}
\end{align}

\State Compute gradients of objective \eqref{compobj} with respect to $\bm{\alpha}$ using either adjoint methods or automatic differentiation.
\State Update $\bm{\alpha}$ via gradient-based optimiser of choice and increment counter $i$.
\EndWhile}
\end{algorithmic}
\end{algorithm}

The per iteration cost of the objective computation in \cref{LDDMMcompalg} break down as follows; the ODE solve which is unchanged with cost $\mathcal{O}(Pn_{v})$, projection of $\mu_{\varphi(T)}$ onto points $\varphi(\bm{c})$ at cost $\mathcal{O}( m^{3})$, and metric computation between compressed measures at $\mathcal{O}(m^{2})$ (assuming $\mu_{S}$ is compressed to the same level as $T$). Overall, the effective cost of a forward pass in \cref{LDDMMcompalg} is therefore $\mathcal{O}( m^{3} + Pn_{v})$ incurred in compressing $\mu_{\varphi(T)}$ and solving forward under the control-point deformation. Therefore, provided $m^{3} < \lvert T\rvert^{2}$ this yields a strong saving in computational complexity for the forward pass compared to the uncompressed case, which has effective complexity $\mathcal{O}( \lvert T\rvert^{2} + Pn_{v})$. Furthermore, in practice, the resulting linear systems to be solved for the compression projection step are of size $m\ll \lvert T\rvert$, which are small enough to fit comfortably on a GPU. As such, the \textit{practical run-time} may be further reduced, as one may parallelise and accelerate the resulting linear solve. As a result, the compressed metric and its gradient computation are in practice much faster to compute than the uncompressed case. Much like the uncompressed case, all steps of \cref{LDDMMcompalg} may be further sped up via efficient libraries for kernel reduction, such as KeOps and using GPUs. Further work could involve additional approximations to the projection operation, exploiting sparsity, and using tools such as conjugate gradients.

\section{Proofs of main results}\label{CompressionofDiscreteMeasures}

We present in this section the proof of \cref{OrthogprojlemmaCurr}. We begin with a sparse control-point approximation of discrete functionals using the Nystrom approximation in RKHS.
\begin{lemma}\label[lemma]{fieldtraceapprox} 
    Given $v^{\alpha}$ of the form
    \begin{align*}
        v^{\alpha}(x) = \sum_{i=1}^{n}{k(x,x_{i})\alpha_{i}},\quad (x_{i},\alpha_{i}) \in \mathcal{X}\times \R^{p}
    \end{align*}
    $m$ distinct control points $\bm{c}=\{c_{i}\}_{i=1}^{m} \subset \{x_{i}\}_{i=1}^{n}$, and $\mu>0$, there is a control-point field of the form:
    \begin{align}\label{approxinv}
          v^{\bm{c},\mu} =  \sum_{i=1}^{m}{k(\cdot,c_{i})\beta_{i}^{\bm{c},\mu}},\quad \beta_{i}^{\bm{c},\mu} \in \R^{p},
    \end{align}
    and a constant $C_{1}$ (depending on $\mu$ and $v^{\alpha}$) such that the following holds
\begin{align}\label{triangle}
    \norm{v^{\alpha} - v^{\bm{c},\mu}}_{V }^{2} \leq C_{1}\mathrm{tr}(K_{XX}-Q_{XX}),\quad Q_{XX}:=K_{XC}K_{CC}^{-1}K_{CX}.
\end{align}
\end{lemma}
\begin{proof}
$ $\newline
The idea is to rewrite each component $v_{j}^{\alpha}$ as a solution to a KRR problem of the form \cref{KRRProblem}, and form its Nystrom approximation \cref{Nystromsol} in the control point subspace. 

Recall that given a field of the form $v^{\alpha}$, the $j$'th component can be written
\begin{align*}
    v_{j}^{\alpha}(x) = \sum_{i=1}^{n}{k(x,x_{i} )\alpha_{ij}  },\quad \alpha^{j} := (\alpha_{1j},\dots,\alpha_{nj} ),\quad y_{j}:=K_{XX}\alpha^{j} .
\end{align*}
In order to apply \cref{thmox}, we rewrite for arbitrary $\mu>0$
\begin{align*}
    \alpha^{j} = (K_{XX} + \mu I_{n})^{-1}(K_{XX} + \mu I_{n})\alpha^{j} =  (K_{XX} + \mu I_{n})^{-1}\big[(K_{XX} + \mu I_{n})K_{XX}^{-1}y_{j}\big],
\end{align*}
so that $v_{j}^{\alpha}$ is written as a kernel ridge regression solution with data
\begin{align*}
    \tilde{y}_{j} := (K_{XX} + \mu I_{n})K_{XX}^{-1}y_{j} = y_{j} + \mu\alpha^{j}.
\end{align*}
One can now form the Nystrom KRR solution approximating $v_{j}^{\alpha}$ as:
\begin{align*}
    v_{j}^{\bm{c},\mu}(x) =  \sum_{i=1}^{m}{k(x,c_{i})\beta_{ij}},\quad  (\beta_{ij})_{i=1}^{m} = (K_{CX}K_{XC} + \mu K_{CC})^{-1}K_{CX}\tilde{y}_{j},
\end{align*}
which gives the resulting trace bound by \cref{thmox}
\begin{align}\label{triangle1}
    \norm{v_{j}^{\alpha} - v^{\bm{c},\mu}_{j}}_{V_{k}}^{2} \leq  \frac{2\mathrm{tr}(K_{XX}-Q_{XX})\norm{\tilde{y_{j} }}^{2}}{\mu^{2}}.
\end{align}
By applying this bound component-wise, and applying \cref{normsplittinglemma}, this gives us the existence of a function of the form 
\begin{align*}
    v^{\bm{c},\mu}(x) = \sum_{i=1}^{m}{k(x,c_{i})\beta_{i}},\quad \beta_{i}=(\beta_{i1},\dots,\beta_{ip} ),\quad v^{\bm{c},\mu}=(v_{1}^{\bm{c},\mu},\dots,v_{p}^{\bm{c},\mu}),
\end{align*}
such that
\begin{align*}
    \norm{v^{\alpha} -   v^{\bm{c},\mu} }_{V}^{2} =\sum_{l=1}^{p}{\norm{v_{l}^{\alpha} - v^{\bm{c},\mu}_{l}}_{V_{k}}^{2}}\leq Ctr(K_{XX}-Q_{XX}),\quad C=\frac{2p}{\mu^{2}}\norm{\tilde{y}}^{2} ,
\end{align*}
so the error is controlled by the trace error of the Nystrom approximation.
\end{proof}
Applying the isometric duality mapping and the above result, this gives the following corollary
\begin{corollary}\label[corollary]{Corollary_trace}
    Suppose we have a discrete functional of the form
    \begin{align*}
        \sum_{i=1}^{n}{\delta_{x_{i}}\alpha_{i}}  \in V^{*},\quad (x_{i},\alpha_{i}) \in \mathcal{X}\times \R^{p},
    \end{align*}
    which we wish to compress. For any fixed $m<n$, there exists a discrete functional of the form
    \begin{align*}
        \sum_{i=1}^{m}{\delta_{c_{i}}\alpha^{c,\mu}_{i}} \in V^{*},
    \end{align*}
    depending on parameter $\mu>0$ such that the following bounds hold:
    \begin{align}\label{deltabound}
        \norm{\sum_{i=1}^{n}{\delta_{x_{i}}{\alpha}_{i} }  -  \sum_{i=1}^{m}{\delta_{c_{i}}\alpha_{i}^{c,\mu} } }_{V^{*}}^{2} \leq Ctr(K_{XX}-Q_{XX}).
    \end{align}
    where $C$ depends on $\mu$ and the functional to be compressed.
\end{corollary}
\begin{proof}

Setting $v^{\alpha}: = L^{-1}(\sum_{i=1}^{n}{\delta_{x_{i}}\alpha_{i}})$, choosing an approximating $v^{\bm{c},\mu}$ from the previous lemma, and by applying the isometric property of the duality mapping $L$  we have
\begin{align*}
 \norm{v^{\alpha} -   v^{\bm{c},\mu} }_{V}^{2} = \norm{L(v^{\alpha} -   v^{\bm{c},\mu}) }_{V^{*}}^{2} =  \norm{L(v^{\alpha}) -   L(v^{\bm{c},\mu}) }_{V^{*}}^{2} \\
 =\norm{L(\sum_{i=1}^{n}{k(x,x_{i})\alpha_{i}}) -   L(\sum_{i=1}^{m}{\alpha_{i}^{c,\mu} k(\cdot,c_{i})})}_{V^{*}}^{2} \\
 = \norm{ \sum_{i=1}^{n}{L(k(x,x_{i})\alpha_{i})} -   \sum_{i=1}^{m}{L(\alpha_{i}^{c,\mu} k(\cdot,c_{i}))}}_{V^{*}}^{2}\\ 
 =\norm{\sum_{i=1}^{n}{\delta_{x_{i}}{\alpha}_{i} }  -  \sum_{i=1}^{m}{\delta_{c_{i}}\alpha_{i}^{c,\mu} } }_{V^{*}}^{2}.
\end{align*}
Applying \cref{fieldtraceapprox} to the previous result yields
\begin{align*}
      \norm{\sum_{i=1}^{n}{\delta_{x_{i}}{\alpha}_{i} }  -  \sum_{i=1}^{m}{\delta_{c_{i}}\alpha_{i}^{c,\mu} } }_{V^{*}}^{2} \leq Ctr(K_{XX}-Q_{XX}).
\end{align*}
\end{proof}
So far, we have approximated a given discrete input functional sparsely, based on a subset of control points chosen from the original basis. We have shown that the error of this approximation is bounded by the trace error of the Nystrom approximation to the kernel matrix $K$, based on the chosen control points. Of course, the quality of this approximation is dependent on a good choice of control-point locations.  The following corollary shows that it suffices to randomly sample control points from a data-dependent input distribution in order to give strong theoretical error bounds on the approximation procedure.
\begin{corollary}\label[corollary]{CurrentExpectationbound}
    Suppose we sample $m$ control points $\bm{c}=\{c_{i}\}_{i=1}^{m}$ from an $m$-DPP as in \cref{eigenvaluethm}. Then, the approximation of \cref{Corollary_trace} satisfies the following in-expectation bound with respect to this distribution:
    \begin{align*}
         \E_{\bm{c}}\norm{\sum_{i=1}^{n}{\delta_{x_{i}}\alpha_{i} }  -  \sum_{i=1}^{m}{\delta_{c_{i}}\alpha^{c,\mu}_{i} } }_{V^{*}}^{2} \leq C(m+1)\sum_{i=m+1}^{n}{\lambda_{i}(K_{XX})},
    \end{align*}
    where $\lambda_{i}(K_{XX})$ are eigenvalues of the kernel-matrix evaluated on the discrete input points $x_{i}$.
\end{corollary}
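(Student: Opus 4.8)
The plan is to chain the deterministic trace bound of \cref{Corollary_trace} with the expected-trace estimate for the Nystrom approximation under $m$-DPP sampling provided by \cref{eigenvaluethm}. The single point that makes this chaining legitimate is that the constant $C$ in \cref{Corollary_trace} does not depend on the random control points $\bm{c}$. Tracing it back through \cref{fieldtraceapprox}, one has $C = \frac{2d}{\mu^{2}}\norm{\tilde{y}}^{2}$ with $\tilde{y}_{j} = (K_{XX}+\mu I_{n})K_{XX}^{-1}y_{j}$, where $y_{j}\in\R^{n}$ collects the values $v_{j}^{\alpha}(x_{1}),\dots,v_{j}^{\alpha}(x_{n})$ of the fixed target field at the fixed data points. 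Hence $C$ is a function of $\mu$, $d$ and the target functional alone, and is constant with respect to the draw of $\bm{c}$.

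Concretely, I would first fix $\mu>0$ and invoke \cref{Corollary_trace} for the realized control points: almost surely,
\begin{align*}
   \norm{\sum_{i=1}^{n}{\delta_{x_{i}}\hat{\alpha}_{i} }  -  \sum_{i=1}^{m}{\delta_{c_{i}}\alpha^{c,\mu}_{i} } }_{V^{*}}^{2} \leq C\,\mathrm{tr}(K_{XX}-Q_{XX}),\quad Q_{XX}=K_{XC}K_{CC}^{-1}K_{CX},
\end{align*}
where $Q_{XX}$ depends on $\bm{c}$ but $C$ does not. Since an $m$-DPP on $K_{XX}$ returns $m$ distinct indices with probability one, the hypothesis of \cref{fieldtraceapprox} (distinct control points, hence invertible $K_{CC}$) is met for every realization, so this bound holds pointwise in $\bm{c}$.

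Next I would take expectations with respect to $p(\bm{c})\propto\det(K_{CC})$ on both sides, using monotonicity of expectation and pulling the deterministic constant $C$ outside:
\begin{align*}
   \E_{\bm{c}}\norm{\sum_{i=1}^{n}{\delta_{x_{i}}\hat{\alpha}_{i} }  -  \sum_{i=1}^{m}{\delta_{c_{i}}\alpha^{c,\mu}_{i} } }_{V^{*}}^{2} \leq C\,\E_{\bm{c}}\bigl[\mathrm{tr}(K_{XX}-Q_{XX})\bigr].
\end{align*}
Applying the $m$-DPP trace estimate of \cref{eigenvaluethm}, namely $\E_{\bm{c}}[\mathrm{tr}(K_{XX}-Q_{XX})]\leq (m+1)\sum_{i=m+1}^{n}\lambda_{i}(K_{XX})$, and keeping the product constant named $C$, yields the claimed inequality.

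The routine parts — monotonicity and linearity of expectation, and the direct citation of \cref{eigenvaluethm} — carry no difficulty. The only genuine point requiring care, and hence the main obstacle, is the bookkeeping that confirms $C$ is independent of $\bm{c}$ so that it may legitimately leave the expectation; for this reason I would make the dependence of $C$ on the fixed data explicit before taking expectations. It is also worth remarking that the statement is really a family of bounds indexed by $\mu>0$, and while one may optimize over $\mu$, any fixed admissible choice already delivers the stated eigenvalue-decay rate.
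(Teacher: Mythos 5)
Your proof is correct and follows essentially the same route as the paper: apply the deterministic trace bound of \cref{Corollary_trace} pointwise in $\bm{c}$, take expectations, and invoke \cref{eigenvaluethm}. Your extra bookkeeping confirming that $C=\frac{2d}{\mu^{2}}\norm{\tilde{y}}^{2}$ is independent of the sampled control points is a detail the paper leaves implicit, and it is verified correctly.
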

\begin{proof}
Taking expectation in \cref{deltabound} with respect to an $m$-DPP distribution of control points, one obtains 
\begin{align*}
   \E_{\bm{c}}\norm{\sum_{i=1}^{n}{\delta_{x_{i}}\alpha_{i} }  -  \sum_{i=1}^{m}{\delta_{c_{i}}\alpha^{c,\mu}_{i} } }_{V^{*}}^{2} \leq  C\E_{\bm{c}}[tr(K_{XX}-Q_{XX})] .
\end{align*}
Applying \cref{eigenvaluethm} yields,
\begin{align*}
         \E_{\bm{c}}\norm{\sum_{i=1}^{n}{\delta_{x_{i}}\alpha_{i}  }  -  \sum_{i=1}^{m}{\delta_{c_{i}}\alpha^{c,\mu}_{i} } }_{V^{*}}^{2}  \leq  C\E_{\bm{c}}[tr(K_{XX}-Q_{XX})] \leq C(m+1)\sum_{i=m+1}^{n}{\lambda_{i}(K_{XX})},
\end{align*}
as required.
\end{proof}

    
Computing the weights for the approximation \cref{approxinv} however can be expensive, as it requires the one off cost of computation of a large-scale kernel matrix vector product $K_{XX}\alpha_{j}$ with cost $\mathcal{O}(n^{2})$, which is comparable to the original metric computation cost for currents and varifolds. For compressing currents as a form of pre-processing for downstream tasks such as matching and computing statistics, this is fine, as one can treat it as a one-off cost, before performing the task of interest. However, this can be tedious if one wishes to compress many currents or varifolds, or perform multiple compressions at different scales. 

One can make this compression algorithm more practical, while retaining the theoretical guarantees, by using the orthogonal projection onto the random subspace. This will be significantly cheaper to compute for a given target functional compared to the Nystrom approximation weights, while still having the same error bound in expectation. One may further make the approximation cheaper by using RLS sampling schemes for choosing control-point positions. This is the content of \cref{OrthogprojlemmaCurr}, which we now prove.

\begin{proof}[Proof of \cref{OrthogprojlemmaCurr}]
We denote by $\mathcal{P}_{c}$ the orthogonal projection into the subspace 
\begin{align*}
    M(\bm{c}) = \mathrm{span}\{ k(\cdot,c_{i})e_{j}:e_{j}\in\R^{p},\quad i=1,\dots,m\quad j=1,\dots,p\},
\end{align*}
spanned by the $m$ control points. We denote the orthogonal projection of $v^{\alpha}$ onto $M(\bm{c})$ as
\begin{align*}
    \mathcal{P}_{c}(v^{\alpha})=\mathcal{P}_{c}(\sum_{i=1}^{n}{k(x,x_{i})\alpha_{i}}) = \sum_{i=1}^{m }{k(x,c_{i})\beta_{i}}.
\end{align*}
By the orthogonal projection conditions applied to the RKHS $V$, one can show the explicit relation
\begin{align*}
   \beta =  [\beta_{1},\dots,\beta_{m}]^{T} = K_{CC}^{-1}Y_{C},
\end{align*}
for the projection weights \cite{connections}. Here we recall the notation,
\begin{align*}
    Y_{c} := (v^{\alpha}(c_{1}),\dots,v^{\alpha}(c_{m}))^{T} \in \R^{m\times p},
\end{align*}
a matrix of values, which is the evaluation of the dual vector-valued function on the control-point locations. 

Indeed, the orthogonal projection conditions yield
\begin{align*}
    \langle v^{\alpha} - \mathcal{P}_{c}(v^{\alpha}),k(\cdot,c_{k})e_{j}\rangle_{V} = 0,\quad k=1,\dots,m,\quad j=1,\dots,p.
\end{align*}
Expanding this out gives
\begin{align*}
    \langle \sum_{i=1}^{n}{k(x,x_{i})\alpha_{i}} - \sum_{i=1}^{m }{k(x,c_{i})\beta_{i}},k(\cdot,c_{k})e_{j}\rangle_{V}
   & =\sum_{i=1}^{n}{k(c_{k},x_{i})\alpha_{i}^{T}e_{j}} - \sum_{i=1}^{m }{k(c_{k},c_{i})\beta_{i}^{T}e_{j}}\\
    &= (\sum_{i=1}^{n}{k(c_{k},x_{i})\alpha_{i}^{T}}) e_{j}- (\sum_{i=1}^{m }{k(c_{k},c_{i})\beta_{i}^{T}})e_{j} \\
    &= Y_{k}^{T}e_{j} -  (\sum_{i=1}^{m }{k(c_{k},c_{i})\beta_{i}^{T}})e_{j} = 0,
\end{align*}
so that
\begin{align*}
    Y_{kj} = (\sum_{i=1}^{m }{k(c_{k},c_{i})\beta_{ij} }) \implies Y_{C}=K_{CC}\beta \implies \beta=K_{CC}^{-1}Y_{C}.
\end{align*}
We now prove the DPP sampling bound of \cref{OrthogprojlemmaCurr}. Using the optimality of the orthogonal projection in inner product spaces and \cref{fieldtraceapprox}, we have the following bound
\begin{align*}
\norm{\sum_{i=1}^{n}{k(x,x_{i})\alpha_{i}}  -   \sum_{i=1}^{m }{k(x,c_{i})\beta_{i}} }_{V }^{2}  = \norm{\sum_{i=1}^{n}{k(x,x_{i})\alpha_{i}}  -  \mathcal{P}_{c}(\sum_{i=1}^{n}{k(x,x_{i})\alpha_{i}}) }_{V }^{2} \\ \leq  \norm{v^{\alpha} - v^{\bm{c},\mu}}_{V }^{2} 
 \leq Ctr(K_{XX}-Q_{XX}),
\end{align*}
and therefore by taking expectations and using \cref{eigenvaluethm} gives,
\begin{align*}
\E_{\bm{c}}\norm{\sum_{i=1}^{n}{k(x,x_{i})\alpha_{i}}  -   \sum_{i=1}^{m }{k(x,c_{i})\beta_{i}} }_{V }^{2} \leq C\E_{\bm{c}}tr(K_{XX}-Q_{XX}) \leq C(m+1)\sum_{i=m+1}^{n}{\lambda_{i}(K_{XX})}.
\end{align*}
Applying the isometry property of the Riesz map once again yields,
\begin{align*}
\E_{\bm{c}}\norm{\sum_{i=1}^{n}{\delta_{x_{i}}\alpha_{i} }  -  \sum_{i=1}^{m}{\delta_{c_{i}}\beta_{i} } }_{V^{*}}^{2} &= \E_{\bm{c}}\norm{\sum_{i=1}^{n}{k(x,x_{i})\alpha_{i}}  -   \sum_{i=1}^{m }{k(x,c_{i})\beta_{i}} }_{V }^{2}\\& \leq C(m+1)\sum_{i=m+1}^{n}{\lambda_{i}(K_{XX})}.
    \end{align*}

We now prove the RLS sampling bound. For this proof, we note that the following holds \cite{connections} for symmetric positive-definite matrices $A \in \R^{n\times n}$:
    \begin{align}\label{tracespectral}
        tr(A) \leq n\norm{A}_{2},
    \end{align}
    where the norm on the right-hand side is the spectral norm.
    
    Suppose we fix $\delta\in (0,\frac{1}{32})$, $S\in \N$ and sample $m$ control points $\bm{c}=\{c_{i}\}_{i=1}^{m}$ from the recursive RLS sampler of \cite{Musco}. From the proof of \cref{OrthogprojlemmaCurr}, we recall that the orthogonal projection approximation based on the given control point set satisfies
    \begin{align*}
        \norm{\sum_{i=1}^{n}{\delta_{x_{i}}\alpha_{i} }  -  \sum_{i=1}^{m}{\delta_{c_{i}}\beta_{i} } }_{V^{*}}^{2} \leq Ctr(K_{XX}-Q_{XX}).
    \end{align*}
    The inequality \eqref{tracespectral} implies that
    \begin{align}
        tr(K_{XX}-Q_{XX}) \leq n\norm{K_{XX}-Q_{XX}}_{2},
    \end{align}
    and therefore,
     \begin{align*}
        \norm{\sum_{i=1}^{n}{\delta_{x_{i}}\alpha_{i} }  -  \sum_{i=1}^{m}{\delta_{c_{i}}\beta_{i} } }_{V^{*}}^{2} \leq Cn\norm{K_{XX}-Q_{XX}}_{2}.
    \end{align*}
    By applying the inequality of \cref{RLSIneq} for recursive RLS sampling to the right-hand side, we know with probability $1-3\delta$ that $m<cS\log(S/\delta)$ and
    \begin{align*}
        \norm{\sum_{i=1}^{n}{\delta_{x_{i}}\alpha_{i} }  -  \sum_{i=1}^{m}{\delta_{c_{i}}\beta_{i} } }_{V^{*}}^{2} \leq \frac{Cn}{S}\sum_{i=S+1}^{n}{\lambda_{i}(K_{XX})}.
    \end{align*}
    This concludes the proof.   
\end{proof}

Finally, we give the proof of \cref{corrcontinuous}, which now follows from the eigenvalue bounds by taking expectation and union bounds.
\begin{proof}[Proof of \cref{corrcontinuous}]
   We have observed so far that both the $m$-DPP and RLS sampling cases are upper bounded up to a multiplicative constant by the term
    \begin{align*}
        \sum_{i=m+1}^{n}{\lambda_{i}(K_{XX})},
    \end{align*}
    as a function of $m$. By the identity \cref{eigdecay}, we have
    \begin{align*}
        \E_{X}\bigg [\sum_{i=m+1}^{n}{\lambda_{i}(K_{XX})}\bigg] \leq n \sum_{i=m+1}^{\infty}{\lambda_{i}}.
    \end{align*}
    Markov's inequality may also be applied to the above to yield a probability $1-\delta$ bound of,
    \begin{align*}
        \sum_{i=m+1}^{n}{\lambda_{i}(K_{XX})} \leq \frac{n}{\delta} \sum_{i=m+1}^{\infty}{\lambda_{i}}.
    \end{align*}
    Taking expectation/union bound and inserting the above bounds into the $m$-DPP and RLS bounds of \cref{OrthogprojlemmaCurr} respectively, gives the first part of \cref{corrcontinuous}.
    
    For the proof of the second part of \cref{corrcontinuous}, it is known by lemma 11 of \cite{JounralSParseVIGP}, that the eigenvalue decay of $\mathcal{K}_{p}$ where $p$ is a mixture of Gaussians can be upper bounded by that of a single Gaussian $p$ with sufficiently large variance. The rate of decay of eigenvalues of $\mathcal{K}_{p}$ for a single Gaussian $p$ is already known to be \cref{theoreticaleigdecay}, which yields the desired exponential decays, by inserting the estimates into the first part of \cref{corrcontinuous}.
\end{proof}

\section{Numerical experiments}\label{Numexpsect}

We now illustrate the performance of our algorithm for the compression of the currents and varifolds representation of real-world shapes taken from modern geometry processing datasets. We explore the following main strengths of our algorithm for compression of geometric measures: {Fast compression times}, {rapid error decay} supported by theoretical bounds, and applicability to large-scale registration problems. We illustrate both the true error and theoretical rates when computationally feasible.

In all subsequent experiments in this section, we use the algorithm of \cite{DAC} for RLS approximation as opposed to the recursive algorithm of \cite{Musco}, for which we prove the second bound of \cref{OrthogprojlemmaCurr}. Both methods yield similar theoretical upper bounds on the Nystrom error, with the \cite{Musco} being tighter. However, despite the tighter theoretical bounds of \cite{Musco}, in practice, the algorithm of \cite{DAC} gives similar error decay while being significantly faster in approximating RLS scores. Therefore, this is our preferred choice of RLS approximation to use in \cref{FunctionalCompressionAlg}. 

All experiments are implemented in PyTorch \cite{Torch}, and run on a Tesla T4 GPU with 16 GB of RAM. We also make use of the PyKeops framework \cite{Keops} to further speed up all kernel reduction operations. We make the code used to generate the results of this section publicly available \footnote{\url{https://github.com/tonyshardlow/GeometricMeasureCompression}}. 
\subsection{Error decay}\label{errordecaysec}
In this section, we study the rate of decay of the compression error as a function of $m$, the number of control points sampled for compression. We use the surface data illustrated in \cref{animalsfig} for our first experiments in this section. This data originates from \cite{ANIMdataset}.
\intextsep = 10pt

\begin{figure}[H]
   \begin{subfigure}[b]{1\textwidth}
\includegraphics[width=.4\textwidth,height=1.8in]{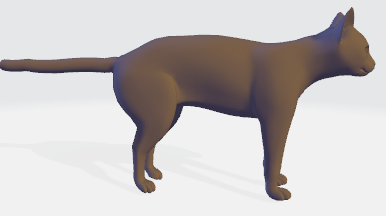} 
\hfill
          \includegraphics[width=.21\textwidth]{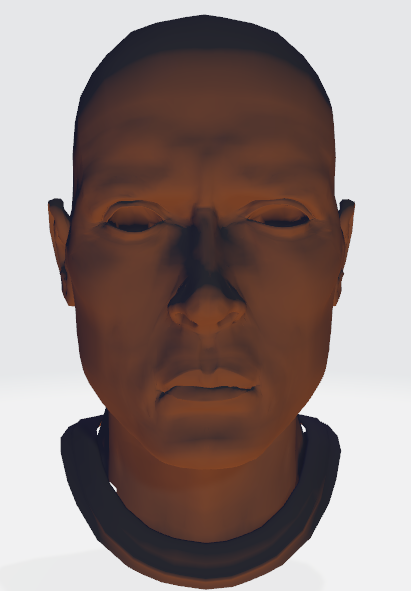} 
          \hfill
     \includegraphics[width=.25\textwidth]{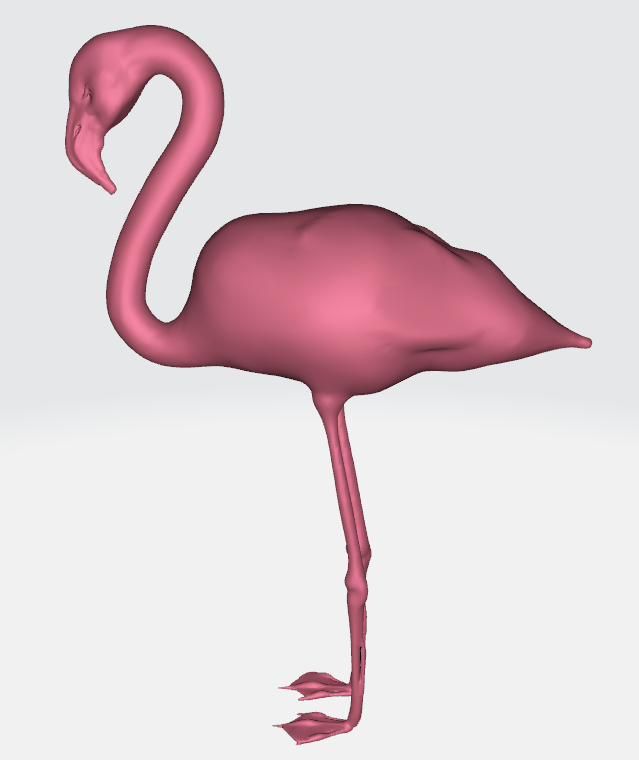} 
          \end{subfigure}
   \caption{\textbf{Left}: Cat (14410 triangles)\quad\textbf{Middle}: Head (31620 triangles)\quad \textbf{Right}: Flamingo (52895 triangles)}\label{animalsfig}
\end{figure}
The data are centred and scaled, so that the cat surface lies in a box of size $1.3\times 3.3 \times 7.1$, the head surface lies in a box of size $3.8\times 5.3 \times 4.0$, and the flamingo surface lies in a box of size $1.5\times 5.3 \times 3.8$. 

For each test surface, we run \cref{FunctionalCompressionAlg} for both currents and varifolds representations, and plot the true (relative) squared error $E=\norm{\mu-\hat{\mu}_{m}}_{W^{*}}^{2}$ of the compression as a function of $m$, where $\mu$ is the target, and $\hat{\mu}_{m}$ denotes an approximation formed with $m$ delta-centres. We also plot for comparison, the error curve for uniform sampling. By uniform sampling, we mean the approximation to $\mu$ obtained by uniformly sampling (without replacement) a subset of Dirac delta centres from $\mu$ and orthogonally projecting onto the subspace spanned by the associated deltas. Finally, the trace bound on the squared error derived in \cref{Corollary_trace} is also plotted, with rescaled numerically tighter constants. Note that we do not plot the curves for eigenvalue bounds, due to the prohibitive cost of computing the eigendecomposition as $m\to n$. 

For the currents experiments, we choose the Gaussian kernel $k(x,y)=\exp(-\frac{\norm{x-y}^{2}}{2\sigma^{2}})$ for $K_{p}$, and set the spatial kernel length-scale equal to $\sigma=0.5$ for all three test cases. For each test shape, one may compute the ratio between $\sigma$ and the shape's diameter to obtain $\nu =\sigma/d$, which contextualises the RKHS scale as a fraction of the data scale. This yields $\nu = 0.063$ for the cat surface, $\nu = 0.065$ for the head surface and $\nu=0.075$ for the flamingo surface. For the varifolds experiments, we choose the Gaussian kernel  $k(x,y)=\exp(-\frac{\norm{x-y}^{2}}{2\sigma^{2}})$ for $K_{p}$, and the spherical Gaussian kernel $k(s,r) = \exp(-\frac{(2-2\langle s,r\rangle)}{2\sigma_{s}^{2}})$ for $K_{s}$. Here, we choose $\sigma_{s}=0.5$ and $\sigma_{p}\in \{0.3,0.5,0.25\}$, respectively, for the cat ($\nu = 0.038$), head ($\nu = 0.065$) and flamingo ($\nu = 0.037 $) test cases. 

All results are shown in \cref{plots1}.
\begin{figure}[H]
\centering
   \begin{subfigure}[b]{.6\textwidth}
         \centering
         \includegraphics[width=.49\textwidth]{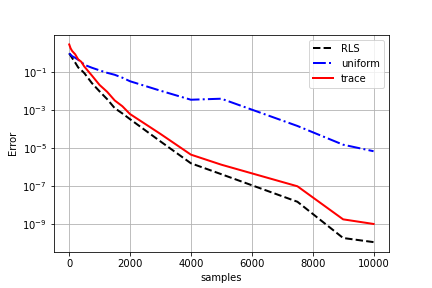}
            \includegraphics[width=.49\textwidth]{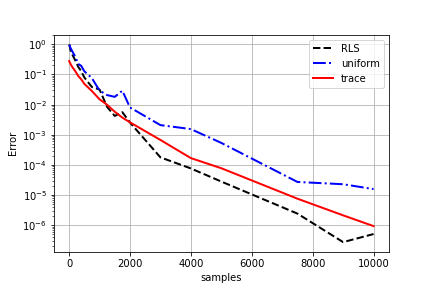}
 
         \includegraphics[width=.49\textwidth]{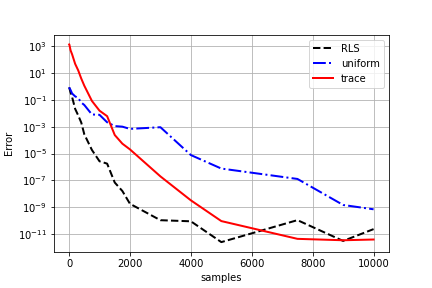}
         \includegraphics[width=.49\textwidth]{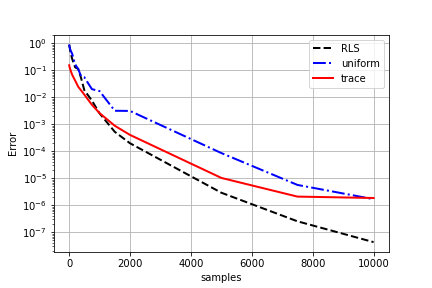}

         \includegraphics[width=.49\textwidth]{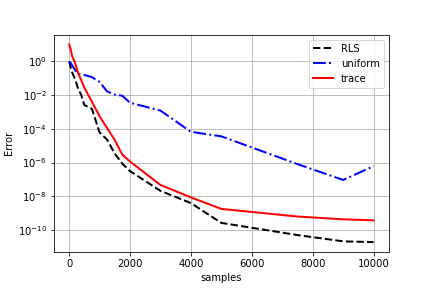}
         \includegraphics[width=.49\textwidth]{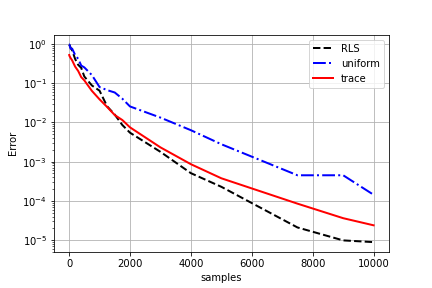}
     \end{subfigure}
     \caption{Numerical curves comparing (relative) error decay of RLS compression (black) to theory bound (red) and uniformly sampled compression (blue), on cat (top), head (middle) and flamingo (bottom) surfaces. \textbf{Left}: Error curves for currents. \textbf{Right}: Error curves for varifolds.  }\label{plots1}
\end{figure}

We observe in all cases that the decay of error of the compressed approximation is rapid, and one can take $m \ll n$ for a good quality of approximation, across all the example cases. One also observes that the derived trace bounds (in red) have decay rate that generally matches that of the true squared error of our algorithm. Finally, we observe that RLS sampling consistently outperforms uniform sampling in all cases. 

The RLS sampling compression also tends to produce better \tit{quality} samples (in terms of squared error) than uniform sampling. In general, uniform sampling will tend to have more points sampled in regions which are oversampled to begin with, thus only representing the underlying surface well only in densely sampled regions. On the other hand, RLS sampling measures the local `importance' of triangles through the ridge leverage scores and samples them accordingly. Thus, RLS sampling tends to produce more `diverse' samples \cite{Musco} and will produce samples that are well spread out, independent of the initial sampling density. 

We further illustrate qualitatively in \cref{tangentpatches}, samples from the compression algorithm for each of the three test shapes, in the varifold case. We do so by visualising the resulting `tangent patches' centred on control points and normal vectors as in \cite{VarCompression}. 
\raggedbottom
\begin{figure}[H]
   \begin{subfigure}[H]{1\textwidth}
   \hspace{-20pt}
         \includegraphics[width=.5  \textwidth,height=1.8in]{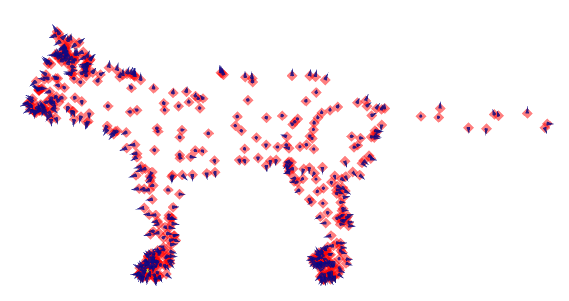}
         \hspace{-10pt}
         \includegraphics[width=.28\textwidth]{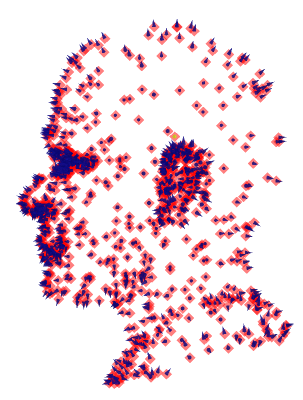}\includegraphics[width=.26\textwidth]{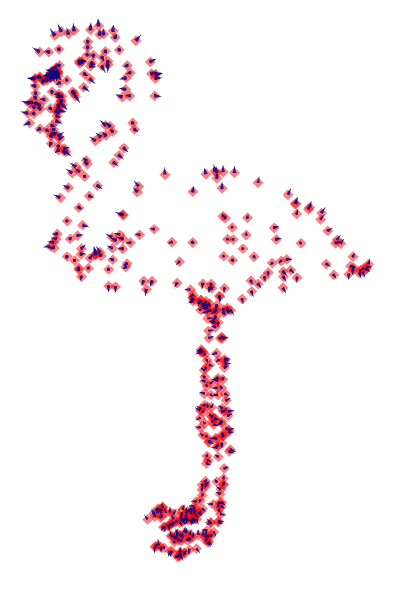}
     \end{subfigure}
     \caption{Visualisation of compressed varifolds for cat (left), head (middle) and flamingo (right) test surfaces, at resolution $m=500$ triangles (over $97\%$ compression ratio). Dark blue cones denote normal directions in the compression, and red patches indicate the location of sampled control points.  }\label{tangentpatches}
\end{figure}

Finally, we note that in many geometry processing applications, one often does not require an extremely small error in order to perform downstream tasks with the compressed representation. In many situations, the data itself is often acquired in a noisy way and can contain many local variations that are not relevant in describing the global geometry. As such, the above curves suggest one can practically choose $m\ll n$ and obtain an acceptable error for the down-line tasks which we perform with the compressions. For example, \cite{DURRLEMAN} suggests a heuristic of $\tau = 5\%$ relative error cut-off for compression of currents in the orthogonal matching pursuit algorithm. 
\vspace{10pt}

\subsection{Matching quality}\label{matchqualitysec}
We now illustrate the effectiveness of the compressed measures for nonlinear shape registration in the LDDMM framework. In particular, we demonstrate that one can obtain a comparable quality of registration to the full (uncompressed) matching problem, when using the compressed matching algorithm \cref{LDDMMcompalg} with $m\ll n$, even when only $1-2\%$ of the underlying triangles are used in the compression of deformed template and target. We shall also demonstrate that using the compressed representation gives a large computational saving for the registration algorithm in terms of run-time.  

We demonstrate on two extremely densely sampled shapes taken from Thingi10k \cite{Thingi10K}, a modern geometry processing dataset. The first, a super high-resolution version of the Stanford bunny with \textbf{$259,898$} triangles, and the second, a high-resolution brain surface with \textbf{$350,328$} triangles. The bunny is centred and scaled to lie in a box centred at the origin of size $4.0\times 4.1 \times 3.4$, and the brain surface to a box of size $3.5\times 3.5 \times 3.6$. In both cases, we compare the matching quality and matching time taken for the uncompressed and compressed matching (using \cref{LDDMMcompalg}) problems. 

In both matching problems, we choose varifolds as our choice of discrepancy. For the varifolds kernels we choose $K_{p}$ to be Gaussian with length-scale $\sigma_{p}=0.2$, and $K_{s}$ to be spherical Gaussian of length-scale $\sigma_{s}=0.3$. We shall compare the final quality of the obtained registrations in the Hausdorff metric, rather than in the varifolds metric, as it is independent of the optimisation objective and gives a measure of alignment of both surfaces without correspondence. Recall that the Hausdorff metric between two sets $A,B \subset \R^{d}$ is defined as follows,
\raggedbottom
\begin{align*}
\vspace{-50pt}
    d_{H}(A,B) := \mathrm{max}\bigg(  \underset{a \in A}{\sup }{\ d(a,B)},  \underset{b \in B}{\sup }{\ d(b,A)}\bigg),\quad d(x,A) := \underset{a \in A}{\inf}{\norm{x-a}_{2}}.
\end{align*}
\raggedbottom
The quality of registration is compared by computing the Hausdorff metric between the target and deformed template. 

The template is in both cases taken to be a sphere mesh lying in a box centred at the origin of size $3.5\times 3.5 \times 3.5$, with the same resolution as the target surface. For the spatial kernel of the RKHS of deformation vector fields, we fix a sum of $4$ Gaussian kernels of decreasing length-scales $\sigma_{i} \in \{1.0,0.5,0.2,0.1\}$.  For all experiments, the diffeomorphism and push-forward are computed via a forward Euler scheme with $10$ time-steps. The kernel reductions and gradient computations for the diffeomorphism are performed using Keops \cite{Keops} and automatic differentiation. Optimisation is performed via an LBFGS routine run for $500$ iterations. We choose $P$, the number of spatial deformation control points of \cref{LDDMMcompalg} parametrising the diffeomorphism, to be the same as the number $m$ of varifold compression points. Spatial deformation control points are sampled from the spherical template uniformly and subsequently fixed for both compressed and uncompressed matching problems.  

We begin with the comparison for the bunny surface. We compress the target offline and the deformed template at each iteration, down to $7500$ triangles each, which is a compression ratio of \textbf{$97\%$}. Matching results are shown below in \cref{matchingrabbitcomp}.
\begin{figure}[H]
 \hspace{50pt}\begin{subfigure}{0.35\textwidth}
     \includegraphics[width=\textwidth]{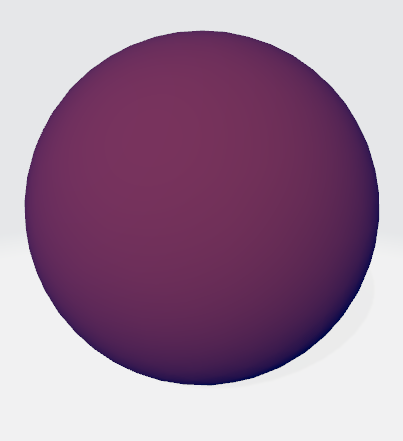}
 \end{subfigure}
 \hspace{20pt}
 \begin{subfigure}{0.35\textwidth}
     \includegraphics[width=\textwidth]{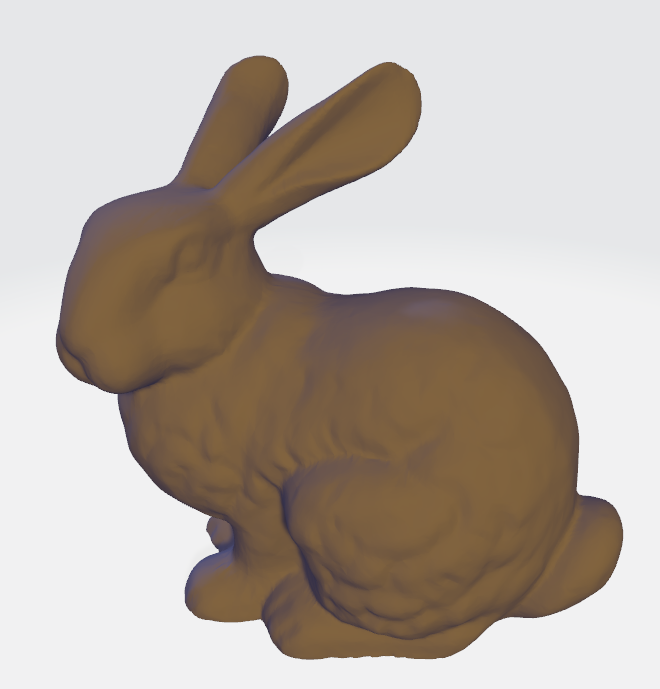}
 \end{subfigure}
 
 \medskip
 \hspace{50pt}\begin{subfigure}{0.35\textwidth}
     \includegraphics[width=\textwidth]{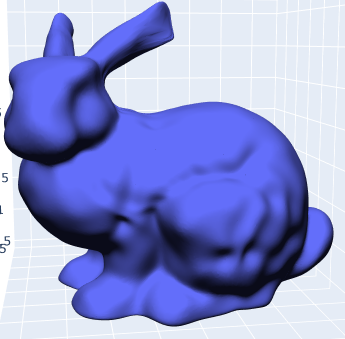}
 \end{subfigure}
 \hspace{20pt}
 \begin{subfigure}{0.35\textwidth}
     \includegraphics[width=\textwidth]{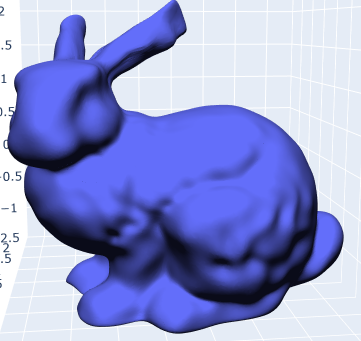}
 \end{subfigure}
 \caption{\textbf{Top left}: spherical template. \textbf{Top right}: target mesh. \textbf{Bottom left}: Matching with full metrics taking $1$ hour and $41$ minutes, and Hausdorff metric error $d_{H} = 0.026$. \textbf{Bottom right}: Matching with $97\%$ compression of template and target taking only $14$ minutes, and Hausdorff metric error $d_{H} = 0.030$. }\label{matchingrabbitcomp}
\end{figure}
We observe in \cref{matchingrabbitcomp} that the matching quality is almost identical in $d_{H}$ for compressed and uncompressed registrations, and in some regions, better than the full matching. As one expects, the compressed matching algorithm yields a significant speed-up of $7$ times over the uncompressed version, reducing overall matching time from $\textbf{5918}$s to $\textbf{860}$s.

In the second example, we consider the analogous comparison for the brain surface. We compress the target offline, and the deformed template at each iteration down to $5000$ triangles each, which yields a compression ratio of \textbf{$99\%$}. The results are shown below in \cref{matchingbraincmop}.

\begin{figure}[H]
 \hspace{50pt}\begin{subfigure}{0.3\textwidth}
     \includegraphics[width=\textwidth]{Images/Matching/sphere.png}
 \end{subfigure}
 \hspace{20pt}
 \begin{subfigure}{0.32\textwidth}
     \includegraphics[width=\textwidth]{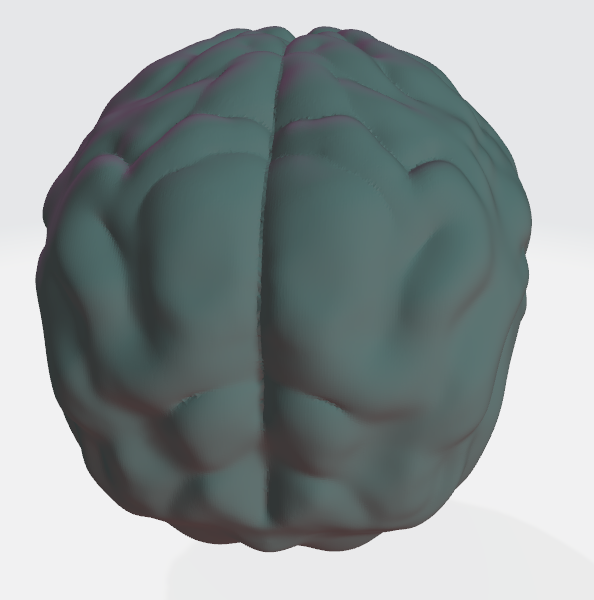}
 \end{subfigure}
 
 \medskip
 \hspace{50pt}\begin{subfigure}{0.3\textwidth}
     \includegraphics[width=\textwidth]{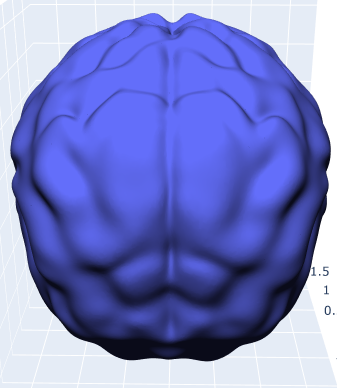}
 \end{subfigure}
 \hspace{20pt}
 \begin{subfigure}{0.33\textwidth}
     \includegraphics[width=\textwidth]{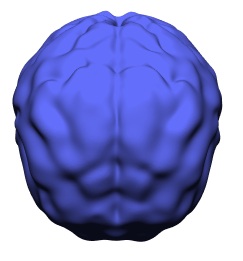}
 \end{subfigure}
 \caption{\textbf{Top left}: spherical template. \textbf{Top right}: target mesh.  \textbf{Bottom left}: matching without compression, taking $1$ hour and $49$ minutes, and Hausdorff metric error $d_{H} = 0.005$. \textbf{Bottom right}: the same example matching but with $99\%$ compression taking only $11$ minutes, and Hausdorff metric error $d_{H} = 0.007$.
  }\label{matchingbraincmop}
\end{figure}

Once again, this yields sizeable runtime savings, with a factor of $10$ times speed-up over the uncompressed matching problem, with overall matching time reduced from $\textbf{6540}$s to $\textbf{660}$s, while still maintaining a similiar $d_{H}$ error.

\subsection{Compression speed}\label{compressionspeedsec}
Finally, we compare the runtime of our compression algorithm to existing compression methods for currents \cite{DURRLEMAN} and varifolds \cite{VarCompression}. We demonstrate in this section that one can compress currents and varifolds to a fixed size using our algorithm, in a fraction of the run-time of the existing algorithms, while retaining fast decay of the compression error in dual metric as a function of $m$. This makes the RLS compression particularly suited to repeated/per-iteration compression where one wishes to compress shapes routinely as part of a general geometric learning algorithm. Furthermore, in situations where many compressed measures need to be computed, such as group-wise large-scale shape modelling, our algorithm has a distinct advantage due to rapid compression times and error decay. 

First, we illustrate how the run-time (in seconds) and compression error of \cref{FunctionalCompressionAlg} compare to the existing method \cite{DURRLEMAN} for \textbf{currents} compression, which greedily adds compression control points one by one based on the true approximation error. We demonstrate this on the cat and head example shapes from \cref{errordecaysec}. For both examples, we set the spatial kernel $K_{p}$ to be Gaussian with length-scale $\sigma=0.5$. Results are shown in \cref{greedyfig}.

\begin{figure}[H]
      \begin{subfigure}[b]{1\textwidth}
         \includegraphics[width=.45\textwidth]{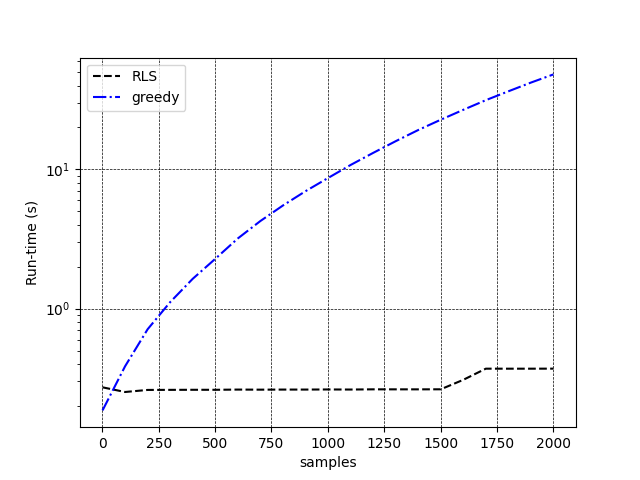}
         \includegraphics[width=.49\textwidth]{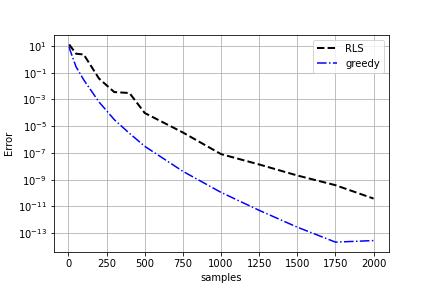}
           \end{subfigure}
           
      \begin{subfigure}[b]{1\textwidth}
      
         \includegraphics[width=.45\textwidth]{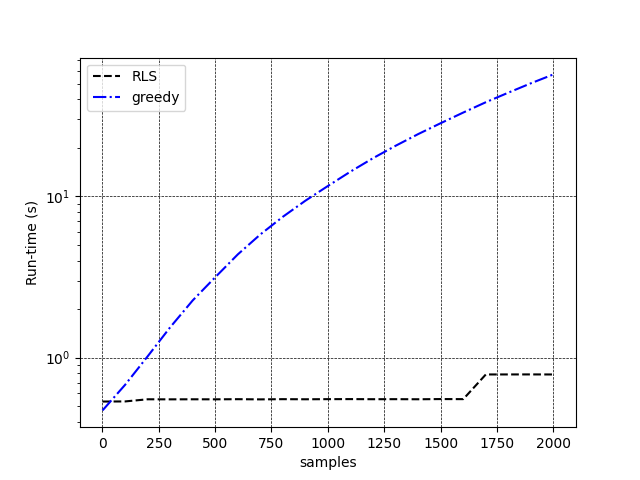}
         \includegraphics[width=.49\textwidth]{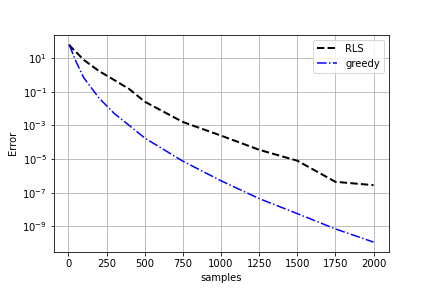}
     \end{subfigure}
     \caption{Run-time in seconds (left column) and error comparison (right column) of RLS method vs Greedy method \cite{DURRLEMAN} for compression of \textbf{currents} on example surface `cat' (top row) and `head' (bottom row). } \label{greedyfig}
\end{figure}

In \cref{greedyfig}, we observe that our proposed compression algorithm is up to $100\times$ faster in this range than the greedy method. The run-time of \cite{DURRLEMAN} grows rapidly as a function of sample size $m$, whereas our method grows extremely slowly. In terms of error, we observe that the \tit{rate} of decay is similarly fast for both methods, although the greedy method tends to require generally fewer samples to achieve a similiar error.


Next, we illustrate how the run-time and compression error of \cref{FunctionalCompressionAlg} compares to the existing \cite{VarCompression} state of the art for \textbf{varifolds} compression via optimisation. These comparisons are made on the bunny and brain examples of \cref{matchqualitysec}. We are able to make the comparison here on much larger examples, as the RLS sampling and varifold quantisation \cite{VarCompression} both have run-times that are much faster than \cite{DURRLEMAN}. For both examples, we set $K_{p}$ to be the Gaussian kernel with length-scale $\sigma=0.5$, and $K_{s}$ to be spherical Gaussian with $\sigma_{s}=0.5$. For the algorithm of \cite{VarCompression}, for each sample size $m$, we initialise the optimisation with delta centres uniformly randomly sampled from those of the ground truth varifold. This is the same initialisation used in \cite{VarCompression}. Results are shown in \cref{comparisonofvarifoldcompressiontime}.

\begin{figure}[H]
      \begin{subfigure}[b]{1\textwidth}
        \includegraphics[width=.4\textwidth]{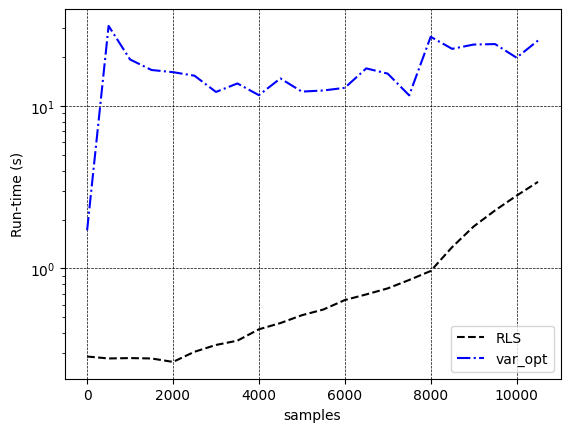}
         \includegraphics[width=.49\textwidth]{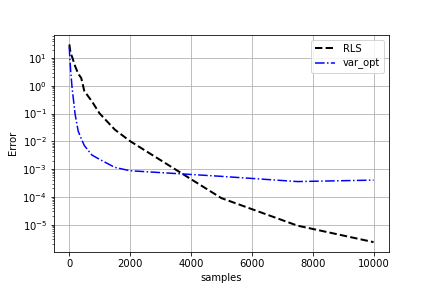}
     \end{subfigure}
  \begin{subfigure}[b]{1\textwidth}
       \includegraphics[width=.4\textwidth]{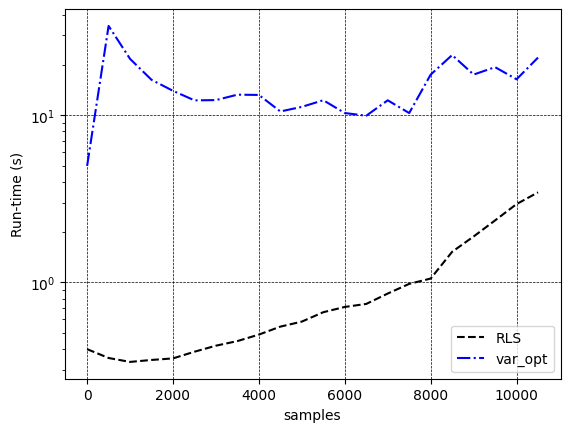}
         \includegraphics[width=.49\textwidth]{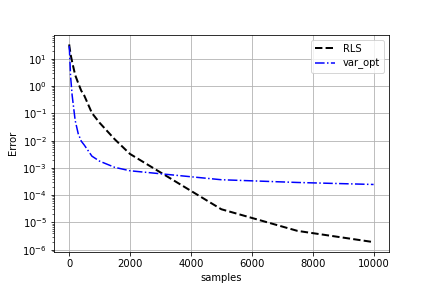}
     \end{subfigure}
     \caption{Run-time in seconds (left column) and error comparison (right column)  of RLS method vs optimisation method \cite{VarCompression} for compression of \textbf{varifolds}, on example surface `bunny' (top row) and `brain' (bottom row). }\label{comparisonofvarifoldcompressiontime}
\end{figure}

For varifolds, RLS sampling outperforms the optimisation-based approach in run-time, and tends to be up to $40\times$ faster in this range, demonstrating the benefits of sampling vs explicit optimisation. On the other hand, the optimisation-based compression scheme exhibits faster initial error decay. However, we observe that the method of \cite{VarCompression} can become stuck in local minima as the compression size $m$ increases, while the RLS-based approach continues to decrease the error as $m$ increases, due to the theoretical guarantees on our method's convergence. Indeed, the method of \cite{VarCompression} does not provide any convergence guarantees on the approximation error, and relies on a non-convex optimisation which is sensitive to initialisation. For such methods, one may require multiple runs with different initialisations, and iterative/nested continuation-type schemes where the output of compression at a coarse level is used as initialisation for a finer level. An advantage of our approach in comparison is that we have strong error bounds with high probability over draws of the RLS distribution, resulting in a more robust compression scheme.

\section{Conclusion}\label{concplusfut}
In this work, we have derived an algorithm for the compression of large-scale currents and varifolds using ridge leverage score sampling and the Nystrom approximation in RKHS. We have derived convergence bounds and rates of error decay on this compression algorithm as a function of the chosen kernels and associated parameters. The rapidly decaying error bounds, scalability, and fast run-times of the algorithm make it well-suited to routine use as well as pre-processing of measure-based shape representations. Numerous practical examples highlight the strengths of this framework for compression and accelerating registration, especially in the large-scale setting. We have also demonstrated the benefits of our method over existing compression techniques, which rely on greedy iterative schemes and non-convex optimisation procedures, both of which become slow in the large-scale setting. 


We also leave as future work the task of extending/modifying our algorithm to compression of higher-order geometric measure representations such as normal cycles \cite{NormalCycles}; a generalisation of currents, which is provably sensitive to curvature information and boundaries, in discrete and continuous shapes. Such properties make it an attractive choice of shape metric, especially for nonlinear shape registration frameworks, such as LDDMM. However, the normal cycles representation comes at a significantly increased computational cost, making it unattractive in large-scale applications. A compression algorithm adapted to the normal cycle representation would finally allow the routine use of such higher-order measures on large-scale geometry processing applications.

\newpage
\appendix


\section{Norm splitting}\label{normlemma}

We now show that the computation of the squared $V$ norm for certain types of vector fields we are interested in splits into a sum of squared $V_{k}$ norms over each dimension.

\begin{lemma}\label[lemma]{normsplittinglemma}
Suppose that we are given an RKHS of vector fields $V$ induced by a scalar diagonal reproducing kernel of the form $K(x,y)=k(x,y) I_{d}$. Given $v \in V$ of the form:
\begin{align*}
    v(x) = \sum_{i=1}^{n}{K(x,x_{i})\alpha_{i}},
\end{align*}
the $V$ norm (induced by $K$) has the following form:
\begin{align}\label{norm}
    \norm{v}_{V}^{2} = \sum_{i,j=1}^{n}{\alpha_{j}^{T}K(x_{i},x_{j})\alpha_{i}},
\end{align}
which for scalar diagonal kernels reduces to,
\begin{align}
    \norm{v}_{V}^{2} = \sum_{l=1}^{d}{\norm{v_{l}}_{V_{k}}^{2}},
\end{align}
where $v_{l}$ denotes the dimension $l$ component of $v$ and $V_{k}$ is the RKHS of real valued functions induced by $k$.
\end{lemma}
\begin{proof}
The identity \eqref{norm} is standard and follows from the following identities in RKHS
\begin{align*}
    \norm{v}_{V}^{2} = (Lv,v)\\
    L(K(\cdot,x)\alpha) = \alpha^{T}\delta_{x}.
\end{align*}
For the second claim, it is an easy computation that
    \begin{align*}
    \norm{v}_{V}^{2} = \sum_{i,j=1}^{n}{k(x_{i},x_{j})\alpha_{j}^{T}\alpha_{i}} = \sum_{i,j=1}^{n}{k(x_{i},x_{j})\sum_{l=1}^{d}{\alpha_{jl}\alpha_{il}}}
    = \sum_{i,j=1}^{n}{\sum_{l=1}^{d}{k(x_{i},x_{j})\alpha_{jl}\alpha_{il}}} \\ = \sum_{l=1}^{d}{\sum_{i,j=1}^{n}{{k(x_{i},x_{j})\alpha_{jl}\alpha_{il}}}} 
    = \sum_{l=1}^{d}{\norm{v_{l}}_{V_{k}}^{2}},
\end{align*}
where $v_{l}$ denotes the dimension $l$ component of $v$ so that
\begin{align*}
    v_{l} = \sum_{i=1}^{n}{k(\cdot,x_{i})\alpha_{il}},
\end{align*}
with weights $\alpha_{l}=(\alpha_{il})_{i=1}^{n}$.
\end{proof}

\section{MCMC for $m$-DPP}
A popular way to sample cheaply from an $m$-DPP is to run an MCMC chain that converges to the target $m$-DPP. There is a large literature on deriving fast mixing MCMC algorithms for $m$-DPP sampling. We give one example here from \cite{Anari}

\begin{algorithm}[H]\label[algorithm]{MCMKDPP}
\caption{MCMC for sampling an $m$-DPP}
\begin{algorithmic}[1]
\State Initialise number of points $m$, kernel function $k$, number of MCMC chain iterations $R$, $\bm{X}=\{x_{i}\}_{i=1}^{n}$, and uniformly sample an index set $S_{0}$ of size $m$.
\While{$r<R$}
    \State Sample $i$ uniformly from $S_{r}$ and $j$ uniformly from $\bm{X}-S_{r}$. Define the set $T=(S-{i})\cup {j}$.

    \State Compute transition probabilities $p_{ij}=\frac{1}{2}\min({1,\det(K_{T})/\det(K_{S_{r}}) })$  .
   \State Sample the next state so that with probability $p_{ij}$ we have $S_{r+1}=T$, otherwise $S_{r+1}=S_{r}$.
\EndWhile 
\State Return sample $S_{R}$
\end{algorithmic}
\end{algorithm}
Using a smart way to compute the inner loop, one can show the per-iteration cost is $\mathcal{O}(m^{2})$. This chain is observed to converge to the target $m$-DPP much faster in practice than the theoretical bounds, as evidenced in \cite{Anari}. Furthermore, one obtains the theoretical guarantees of \cref{CompressionofDiscreteMeasures}, asymptotically as the chain converges.

\bibliographystyle{siamplain}
\bibliography{references.bib}

\begin{thebibliography}{10}

\bibitem{Anari}
{\sc N.~Anari, S.~Oveis~Gharan, and A.~Rezaei}, {\em Monte {C}arlo {M}arkov
  {C}hain algorithms for sampling strongly {R}ayleigh distributions and
  determinantal point processes}, in 29th Annual Conference on Learning Theory,
  V.~Feldman, A.~Rakhlin, and O.~Shamir, eds., vol.~49 of Proceedings of
  Machine Learning Research, Columbia University, New York, New York, USA,
  23--26 Jun 2016, PMLR, pp.~103--115,
  \url{https://proceedings.mlr.press/v49/anari16.html}.

\bibitem{ConvergenceofVarCurr2}
{\sc S.~Arguillère, E.~Trélat, A.~Trouvé, and L.~Younes}, {\em Multiple
  shape registration using constrained optimal control}, SIAM Journal on
  Imaging Sciences, 9 (2016), pp.~344--385.

\bibitem{RKHS}
{\sc N.~Aronszajn}, {\em Theory of reproducing kernels}, Transactions of the
  American Mathematical Society, 68 (1950), pp.~337--404,
  \url{http://dx.doi.org/10.2307/1990404}.

\bibitem{BachKernel}
{\sc F.~Bach}, {\em On the equivalence between kernel quadrature rules and
  random feature expansions}, Journal of Machine Learning Research, 18 (2017),
  pp.~1--38.

\bibitem{SpectralMethods}
{\sc M.-A. Belabbas and P.~J. Wolfe}, {\em Spectral methods in machine learning
  and new strategies for very large datasets}, Proceedings of the National
  Academy of Sciences, 106 (2009), pp.~369--374.

\bibitem{KernquadDPP}
{\sc A.~Belhadji, R.~Bardenet, and P.~Chainais}, {\em Kernel quadrature with
  {DPP}s}, in Advances in Neural Information Processing Systems, H.~Wallach,
  H.~Larochelle, A.~Beygelzimer, F.~d\textquotesingle Alch\'{e}-Buc, E.~Fox,
  and R.~Garnett, eds., vol.~32, Curran Associates, Inc., 2019,
  \url{https://proceedings.neurips.cc/paper_files/paper/2019/file/7012ef0335aa2adbab58bd6d0702ba41-Paper.pdf}.

\bibitem{KernelInterpCVS}
{\sc A.~Belhadji, R.~Bardenet, and P.~Chainais}, {\em Kernel interpolation with
  continuous volume sampling}, in Proceedings of the 37th International
  Conference on Machine Learning, H.~D. III and A.~Singh, eds., vol.~119 of
  Proceedings of Machine Learning Research, PMLR, 13--18 Jul 2020,
  pp.~725--735, \url{https://proceedings.mlr.press/v119/belhadji20a.html}.

\bibitem{ratesofconvergesparsegp}
{\sc D.~Burt, C.~Rasmussen, and M.~Van~der Wilk}, {\em Rates of convergence for
  sparse variational {G}aussian process regression}, PMLR, 2019.

\bibitem{JounralSParseVIGP}
{\sc D.~R. Burt, C.~E. Rasmussen, and M.~van~der Wilk}, {\em Convergence of
  sparse variational inference in {G}aussian processes regression}, Journal of
  Machine Learning Research, 21 (2020), pp.~1--63,
  \url{http://jmlr.org/papers/v21/19-1015.html}.

\bibitem{Keops}
{\sc B.~Charlier, J.~Feydy, J.~A. Glaunès, F.-D. Collin, and G.~Durif}, {\em
  Kernel operations on the {GPU}, with autodiff, without memory overflows},
  Journal of Machine Learning Research, 22 (2021), pp.~1--6,
  \url{http://jmlr.org/papers/v22/20-275.html}.

\bibitem{FidelityMetrics}
{\sc N.~Charon, B.~Charlier, J.~Glaunès, P.~Gori, and P.~Roussillon}, {\em 12
  - fidelity metrics between curves and surfaces: currents, varifolds, and
  normal cycles}, in Riemannian Geometric Statistics in Medical Image Analysis,
  X.~Pennec, S.~Sommer, and T.~Fletcher, eds., Academic Press, 2020,
  pp.~441--477,
  \url{https://doi.org/https://doi.org/10.1016/B978-0-12-814725-2.00021-2},
  \url{https://www.sciencedirect.com/science/article/pii/B9780128147252000212}.

\bibitem{Charon}
{\sc N.~Charon and A.~Trouvé}, {\em The varifold representation of nonoriented
  shapes for diffeomorphic registration}, SIAM Journal on Imaging Sciences, 6
  (2013), pp.~2547--2580.

\bibitem{DAC}
{\sc F.~Cherfaoui, H.~Kadri, and L.~Ralaivola}, {\em Scalable ridge leverage
  score sampling for the {N}ystrom method}, in ICASSP 2022 - 2022 IEEE
  International Conference on Acoustics, Speech and Signal Processing (ICASSP),
  2022, pp.~4163--4167, \url{https://doi.org/10.1109/ICASSP43922.2022.9747039}.

\bibitem{DURRLEMAN}
{\sc S.~Durrleman, X.~Pennec, A.~Trouvé, and N.~Ayache}, {\em Statistical
  models of sets of curves and surfaces based on currents}, Medical Image
  Analysis, 13 (2009), pp.~793--808,
  \url{https://doi.org/https://doi.org/10.1016/j.media.2009.07.007},
  \url{https://www.sciencedirect.com/science/article/pii/S1361841509000620}.

\bibitem{CurveLDDMM}
{\sc J.~Glaunès, A.~Qiu, M.~I. Miller, and L.~Younes}, {\em Large deformation
  diffeomorphic metric curve mapping}, International Journal of Computer
  Vision, 80 (2008), pp.~317--336.

\bibitem{CurveCompression}
{\sc P.~Gori}, {\em Parsimonious approximation of streamline trajectories in
  white matter fiber bundles.}, IEEE Transactions on Medical Imaging, 35
  (2016), pp.~2609--2620.

\bibitem{KernelcompressionLyons}
{\sc S.~Hayakawa, H.~Oberhauser, and T.~Lyons}, {\em Positively weighted kernel
  quadrature via subsampling}, in Advances in Neural Information Processing
  Systems, S.~Koyejo, S.~Mohamed, A.~Agarwal, D.~Belgrave, K.~Cho, and A.~Oh,
  eds., vol.~35, Curran Associates, Inc., 2022, pp.~6886--6900,
  \url{https://proceedings.neurips.cc/paper_files/paper/2022/file/2dae7d1ccf1edf76f8ce7c282bdf4730-Paper-Conference.pdf}.

\bibitem{VarCompression}
{\sc H.-W. Hsieh and N.~Charon}, {\em Metrics, quantization and registration in
  varifold spaces}, Foundations of Computational Mathematics, 21 (2021),
  pp.~1317--1361.

\bibitem{Musco}
{\sc C.~Musco and C.~Musco}, {\em Recursive sampling for the {N}ystrom method},
  in Advances in Neural Information Processing Systems, I.~Guyon, U.~V.
  Luxburg, S.~Bengio, H.~Wallach, R.~Fergus, S.~Vishwanathan, and R.~Garnett,
  eds., vol.~30, Curran Associates, Inc., 2017,
  \url{https://proceedings.neurips.cc/paper_files/paper/2017/file/a03fa30821986dff10fc66647c84c9c3-Paper.pdf}.

\bibitem{ConvergenceofVarCurr1}
{\sc G.~Nardi, B.~Charlier, and A.~Trouvé}, {\em The matching problem between
  functional shapes via a $bv$ penalty term: A $\gamma$-convergence result},
  Interfaces and Free Boundaries,  (2024).

\bibitem{Torch}
{\sc A.~Paszke, S.~Gross, F.~Massa, A.~Lerer, J.~Bradbury, G.~Chanan,
  T.~Killeen, Z.~Lin, N.~Gimelshein, L.~Antiga, A.~Desmaison, A.~Köpf,
  E.~Yang, Z.~DeVito, M.~Raison, A.~Tejani, S.~Chilamkurthy, B.~Steiner,
  L.~Fang, J.~Bai, and S.~Chintala}, {\em Pytorch: An imperative style,
  high-performance deep learning library}, 2019,
  \url{https://arxiv.org/abs/1912.01703},
  \url{https://arxiv.org/abs/1912.01703}.

\bibitem{NormalCycles}
{\sc P.~Roussillon and J.~A. Glaunès}, {\em Representation of surfaces with
  normal cycles and application to surface registration}, Journal of
  Mathematical Imaging and Vision, 61 (2019), pp.~1069--1095.

\bibitem{ANIMdataset}
{\sc R.~W. Sumner and J.~Popović}, {\em Deformation transfer for triangle
  meshes}, ACM Transactions on Graphics, 23 (2004), pp.~399--405.

\bibitem{GlaunesCurrents}
{\sc M.~Vaillant and J.~Glaunès}, {\em Surface matching via currents},
  Information Processing in Medical Imaging, Proceedings, 3565 (2005),
  pp.~381--392.

\bibitem{connections}
{\sc V.~Wild, M.~Kanagawa, and D.~Sejdinovic}, {\em Connections and
  equivalences between the {N}ystrom method and sparse variational {G}aussian
  processes}, arXiv.org,  (2021).

\bibitem{OGNystrom}
{\sc C.~Williams and M.~Seeger}, {\em Using the {N}ystrom method to speed up
  kernel machines}, in Advances in Neural Information Processing Systems,
  T.~Leen, T.~Dietterich, and V.~Tresp, eds., vol.~13, MIT Press, 2000,
  \url{https://proceedings.neurips.cc/paper_files/paper/2000/file/19de10adbaa1b2ee13f77f679fa1483a-Paper.pdf}.

\bibitem{Thingi10K}
{\sc Q.~Zhou and A.~Jacobson}, {\em Thingi10k: A dataset of 10,000 3d-printing
  models}, arXiv preprint arXiv:1605.04797,  (2016).

\end{thebibliography}

\end{document}